\begin{document}
\newtheorem{problem}{Problem}
\newtheorem{theorem}{Theorem}
\newtheorem{lemma}[theorem]{Lemma}
\newtheorem{claim}[theorem]{Claim}
\newtheorem{cor}[theorem]{Corollary}
\newtheorem{prop}[theorem]{Proposition}
\newtheorem{definition}{Definition}
\newtheorem{question}[theorem]{Question}

%%%%%%%%%%%%%%%%%%%%%%%
% Alphabet blackboard %
%%%%%%%%%%%%%%%%%%%%%%%

\def\F{{\mathbb F}}

%%%%%%%%%%%%%%%%%%
%% PAPER BEGINS %%
%%%%%%%%%%%%%%%%%%

\title[Gauss sums in $\F_q$]{Improved bounds on Gauss sums in arbitrary finite fields}
\author{Ali Mohammadi}
\address{School of Mathematics and Statistics, University of
Sydney, NSW, 2006, Australia}
\email{alim@maths.usyd.edu.au}

%\date{Today}

\pagenumbering{arabic}

\begin{abstract} 
Let $q$ be a power of a prime and let $\F_q$ be the finite field consisting of $q$ elements.  We establish new explicit estimates on Gauss sums of the form $S_n(a) = \sum_{x\in \F_q}\psi_a(x^n)$, where $\psi_a$ is a nontrivial additive character. In particular, we show that one has a nontrivial upper bound on $|S_n(a)|$ for certain values of $n$ of order up to $q^{1/2 + 1/68}$. Our results improve on the previous best known bound, due to Zhelezov.
\end{abstract}
\maketitle

\section{Introduction}

For a prime $p$, let $\F_q$ denote the finite field with $q = p^m$ elements and let $\F_q^{*} =\F_q \backslash \{0\}$.  We define the trace function $$Tr( x) = \sum_{i = 0}^{m-1} x^{p^i}.$$
Let $e_p(x) = \exp(2\pi i x/p)$ and $\psi(x) = e_p(Tr(x))$. Then, for $a \in \F_q$, the functions $\psi_a(x) = \psi(ax)$ represent the additive characters of $\F_q$, with $\psi_{0}$ being the trivial character.
We define a Gauss sum as 
\[
 S_n(a) = \sum_{x\in \F_q}\psi_a(x^n).
\]
It is easy to verify that if $d = \gcd(n, q -1)$, then $S_n(a) = S_d(a)$. Hence, throughout the paper we shall assume that $n | (q-1)$. Let $H$ be a multiplicative subgroup of $\F_q^*$. We consider sums of the form
\begin{equation}
 \label{SaH}
S(a, H) = \sum_{h\in H}\psi_a(h).
\end{equation}
Since $\F_q^*$ is a cyclic group, any subgroup of $\F_q^*$ coincides with the group
\begin{equation}
\label{eqn:nthpowers}
\{x^n: x \in \F_q^{*} \}
\end{equation}
for some $n$. Indeed, fixing $H$ to denote the group of $n$th powers~\eqref{eqn:nthpowers}, it follows that 
\begin{equation}
\label{eqn:EStoGS}
S_n(a) = 1 + nS(a, H).
\end{equation}

By a classical result of Weil we have the bound
\begin{equation}
\label{eqn:WeilBound}
\max_{a\in\F_q^{*}} |S_n(a)| < (n - 1)q^{1/2}.
\end{equation}
This bound becomes trivial for $n \geq q^{1/2} + 1$. In this range the first nontrivial estimate was obtained by Shparlinski~\cite{Shpar} for values of $n$ of order up to $q^{1/2}p^{1/6}$. Bourgain and Chang~\cite{BouCha} showed that for any $\epsilon > 0$, there exists some $\delta = \delta(\epsilon) > 0$ such that if $n$ satisfies
\begin{equation}
\label{eqn:BCCondition}
\gcd\bigg(n, \frac{q -1}{p^\nu - 1} \bigg) <\frac{q^{1-\epsilon}}{p^{\nu}}
\end{equation}
for all $\nu$ with $1\leq \nu <m$, $\nu|m$, then
\[
\max_{a\in\F_q^{*}} |S_n(a)| \ll q^{1-\delta}.
\]
Later, Bourgain and Glibichuk~\cite{BouGlib} obtained an explicit estimate of the form $|S(a, H)| = o(|H|)$, which holds for subgroups $H$, under the restriction that $|H\cap G| < |H|^{1-\eta}$ for some $\eta > 0$ and all proper subfields $G$. Such a restriction is clearly needed as to ensure that $H$ does not largely correlate with a subfield of $\F_q$.

Note that proper subfields $G$ of $\F_q$ have cardinality $|G| = p^{\nu}$, where $1 \leq \nu < m$ with $\nu | m.$ Also recall the group of $n$th powers $H$ is of order $(q -1)/n$. Furthermore, since $H$ and $G^{*}$ are cyclic groups, so is their intersection $H \cap G^{*}$, so that
\begin{equation}
\label{eqn:subfieldintersection}
|H\cap G| =  \frac{\gcd(n,\frac{q-1}{p^{\nu}-1})(p^{\nu}-1)}{n}.
\end{equation}
Hence, it is clear that a condition such as~\eqref{eqn:BCCondition} is also necessary.

For any sets $A, B\subset \F_q$, we define the additive energy between $A$ and $B$ as the quantity
\begin{equation}
\label{eqn:EnergyDefn}
E_{+}(A, B) = |\{ (a_1, a_2, b_1, b_2) \in A^{2}\times B^{2} : a_1 + b_1 = a_2 + b_2\}|
\end{equation}
and write simply $E_{+}(A)$ instead of $E_{+}(A, A)$. We have the trivial upper bound $E_{+}(A) \leq |A|^{3}$, which becomes an equality if $A = G$ for any subfield $G$ of $\F_q$. Given a multiplicative subgroup $H$, it is well known (see~\cite[Lemma 3.1]{KonShp}) that a nontrivial estimate on $E_{+}(H)$ leads to a nontrivial estimate for $|S(a, H)|$.
Zhelezov~\cite{Zhel} showed that if $|H| \ll q^{1/2}$ and
\begin{equation}
\label{eqn:ZhelEnergyCondition}
|H \cap G| \ll |H|^{1 - \delta_1}
\end{equation}
for all proper subfields $G$, with $\delta_1 = 119/605$, then
\begin{equation}
\label{eqn:Zhelenergy}
E_{+}(H) \ll |H|^{3 - \delta_2},
\end{equation}
where $\delta_2 = 1/56 - o(1).$ This was then used to obtain the Gauss sum estimate
\begin{equation}
\label{eqn:ZhelGSbound}
\max_{a\in\F_q^{*}} |S_n(a)| \ll q^{(7-2\delta_{2})/8}n^{(1+\delta_{2})/4},
\end{equation}
assuming $n$ satisfies the condition
\begin{equation}
\label{eqn:zhelgcdcondition}
\gcd\bigg(n, \frac{q -1}{p^\nu - 1} \bigg) \ll \frac{n^{\delta_1}q^{1-\delta_1}}{p^{\nu}}
\end{equation}
for all $\nu$ with $ 1\leq \nu < m$, $\nu | m$. 

In this paper, following the same approach as \cite[Lemma 3]{Zhel}, we improve the bound~\eqref{eqn:Zhelenergy} and remove the restriction $|H| \ll q^{1/2}$. As outlined in \cite[Remark~1]{Zhel}, there is an interplay between the restriction \eqref{eqn:ZhelEnergyCondition} and the bound \eqref{eqn:Zhelenergy}. We relax this dependence considerably and make it explicit in the presentation of our result. Furthermore, our estimates also hold for groups $H$ which satisfy $|H\cap G| \ll |G|^{1/2}$, for all proper subfields $G$. These results in turn lead to improvements and generalisations of the bound~\eqref{eqn:ZhelGSbound}.

We also consider a similar approach as Bourgain and Garaev, in~\cite[Theorem~1.2]{BouGar}, to give an explicit bound on trilinear exponential sums 
\begin{equation}
\label{eqn:IntroTrilinear}
\sum_{x\in X}\sum_{y\in Y} \sum_{z \in Z} \psi(xyz),
\end{equation}
for sets $X, Y, Z \subset \F_q$. This bound is then used to estimate $S(a, H)$. 

In the context of prime fields, progress towards obtaining explicit estimates of Gauss sums for large values of $n$ can be traced back to Shparlinski~\cite{Shpar1}. This was improved by Heath-Brown and Konyagin~\cite{HeaKon} based on Stepanov's method. Much of the subsequent progress on this problem has been largely dependent on the development of the sum-product phenomenon in finite fields. In particular, the work of Bourgain and Garaev~\cite{BouGar} may be viewed as a significant step in this direction. We refer the interested reader to~\cite{RNRuShk, MRSS, Shkr} for some of the currently best-known related results. 

We mention that the sum-product type techniques and estimates that are known over $\F_p$ are far superior and do not seem to readily extend to the more general setting of $\F_q$. Nevertheless, this paper seeks to address some of the important questions in the study of exponential sums in $\F_q$ using sum-product results which are currently available in this setting.

\subsection*{General notation}
For a set $A\subset \F_q$, we define the difference set $A-A = \{ a - b : a, b \in A\}$ and the ratio set $A/A = \{ a / b : a, b \in A, b\not=0\}$.

Given positive real numbers $X$ and $Y$, we use $X = O(Y)$, $ X \ll Y$ or $Y\gg X$ to imply that there exists an absolute constant $c >0$ such that $X \leq cY$.
If the constant $c$ depends on a parameter $\epsilon$, we write $X \ll_{\epsilon} Y$. If $X \ll Y$ and $Y \ll X$, we write $X \approx Y$. We use $X \lesssim Y$ to mean that there exists an absolute constant $c >0$ such that $X \ll (\log Y)^{c} Y$.

\section{Main Results}
We have the following improvement of~\eqref{eqn:Zhelenergy}.
\begin{theorem}
\label{thm:AddEnergy}
Let $A$ be any subset of $\F_q$, satisfying $|A/A| \approx |A|$. Fix $\delta \leq 1/33$. There exists an absolute constant $\lambda > 0$ such that if 
\begin{equation}
\label{eqn:energycnd1}
 |A \cap cG| \leq \lambda |A|^{1 - \delta} 
\end{equation}
for all proper subfields $G$ of $\F_q$ and elements $c\in \F_q$, then
\begin{equation}
\label{eqn:EnergyBound}
E_{+}(A) < \max \bigg\{ |A|^{3-\delta}, \frac{|A|^{3 + 1/33}}{q^{1/33}} \bigg\}.
\end{equation}
If, for all proper subfields $G$ and elements $c$ in $\F_q$, the set $A$ satisfies
\begin{equation}
\label{eqn:energycnd2}
 |A \cap cG| \ll |G|^{1/2}, 
\end{equation}
then estimate~\eqref{eqn:EnergyBound} holds with $\delta = 1/33$.
\end{theorem}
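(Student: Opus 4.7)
The plan is to follow the Balog--Szemer\'edi--Gowers plus sum-product dichotomy strategy used by Zhelezov, but to feed in the sharpest currently available quantitative sum-product estimates in $\F_q$ and to propagate an additional $q$-dependent term through the argument so that no a priori cap $|A|\lesssim q^{1/2}$ is needed. Assume for contradiction that $E_+(A)\geq\max\{|A|^{3-\delta},|A|^{3+1/33}q^{-1/33}\}$. Applying Balog--Szemer\'edi--Gowers at this energy threshold produces a subset $A'\subseteq A$ with $|A'|\gtrsim|A|^{1-O(\delta)}$ and $|A'-A'|\lesssim|A'|^{1+O(\delta)}$. Since $A'/A'\subseteq A/A$, the hypothesis $|A/A|\approx|A|$ combined with $|A|/|A'|\leq|A|^{O(\delta)}$ gives $|A'/A'|\lesssim|A'|^{1+O(\delta)}$ as well, so $A'$ has simultaneously small difference set and small ratio set.

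The next step exploits this double smallness via a sum-product structural theorem in $\F_q$ of the type introduced by Bourgain--Glibichuk--Konyagin and refined in subsequent work (in quantitative forms closely related to those used in \cite{BouGar} and \cite{Zhel}). Such a theorem asserts that if both $|A'-A'|$ and $|A'/A'|$ are bounded by $|A'|^{1+O(\delta)}$, then either $|A'|$ already saturates $\F_q$ up to a $q^{O(\delta)}$ loss, or there exist $c\in\F_q$ and a proper subfield $G\subsetneq\F_q$ with $|A'\cap cG|\gtrsim|A'|^{1-\eta}$, where $\eta$ is linear in $\delta$ with implied constant dictated by the best available sum-product exponent in $\F_q$. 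Matching the numerology of this exponent against the losses incurred in the first step is precisely what produces the threshold $1/33$, in place of the $1/56$ that followed from the inputs available to \cite{Zhel}.

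The two branches of the maximum in \eqref{eqn:EnergyBound} correspond to which alternative of the structural theorem is triggered. The subfield branch $|A'\cap cG|\gtrsim|A'|^{1-\eta}$ transfers to $|A\cap cG|\gtrsim|A|^{1-O(\delta)}$ and, for $\lambda$ small enough and $\delta\leq 1/33$, contradicts \eqref{eqn:energycnd1}, producing the bound $|A|^{3-\delta}$ in the regime $|A|\lesssim q^{1/(33\delta+1)}$. The large-$|A'|$ branch instead forces $|G|$ to be a fixed power of $q$, at which point hypothesis \eqref{eqn:energycnd2}, $|A\cap cG|\ll|G|^{1/2}$, is precisely what is needed to close out the contradiction, yielding the second bound $|A|^{3+1/33}q^{-1/33}$ and, at the borderline $\delta=1/33$, the second part of the theorem. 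The main obstacle is the simultaneous calibration of the three loss parameters arising from BSG, from Pl\"unnecke--Ruzsa applied to the ratio set, and from the structural theorem: only when each of these is used in its sharpest known form do the losses balance at $\delta=1/33$ in both regimes, and the dichotomy between hypotheses \eqref{eqn:energycnd1} and \eqref{eqn:energycnd2} becomes exactly the dichotomy between the two structural alternatives.
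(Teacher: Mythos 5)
Your skeleton (contradiction hypothesis, Balog--Szemer\'edi--Gowers to extract $A'$ with small difference set, smallness of $A'/A'$ inherited from $|A/A|\approx|A|$, then a sum-product input plus the subfield non-concentration hypothesis) matches the paper's strategy, but the step that actually carries the proof is missing. The paper does not use a structural dichotomy of the form ``either $A'$ nearly saturates $\F_q$ or $A'$ concentrates on a subfield coset''; no such theorem is available with explicit exponents strong enough to yield $1/33$. What is used instead is the explicit sum-ratio inequality of Roche-Newton (Lemma~\ref{lem:sum-ratio}): under the hypothesis $|A'\cap cG|\leq\max\{C|G|^{1/2},\eta|A'|\}$, one has $|A'-A'|^7|A'/A'|^4\gg|A'|^{12}$ or $|A'-A'|^6|A'/A'|^5\gg|A'|^{12}$ when $|A'|\ll q^{1/2}$, and $|A'-A'|^7|A'/A'|^4\gg|A'|^{10}q$ when $|A'|\gg q^{1/2}$. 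The exponent $1/33$ is not the output of ``matching numerology against the best available sum-product exponent'' in some unspecified way: it is the solution of $7\bigl(1+\tfrac{4\delta_*}{1-\delta_*}\bigr)+4\cdot\tfrac{1}{1-\delta_*}\geq 12$, i.e.\ $33\delta_*\geq 1$, obtained by substituting the Schoen--BSG bounds $|A'-A'|\ll|A'|^{1+4\delta_*/(1-\delta_*)}$ and $|A'/A'|\ll|A'|^{1/(1-\delta_*)}$ into the first alternative. Without naming this specific lemma and performing this computation, the claimed constant cannot be reached; your proposal as written could equally well ``prove'' any other constant.

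Your account of the case analysis is also incorrect. The two terms in the maximum of~\eqref{eqn:EnergyBound} do not correspond to the two hypotheses~\eqref{eqn:energycnd1} and~\eqref{eqn:energycnd2}, nor to a ``subfield branch'' versus a ``large-$A'$ branch'' of a structural theorem. They correspond to whether $|A'|\ll q^{1/2}$ or $|A'|=q^{\beta_*}$ with $\beta_*>1/2$ in Lemma~\ref{lem:sum-ratio}: the second regime replaces $|A'|^{12}$ by $|A'|^{10}q$ on the right-hand side and produces the term $|A|^{3+1/33}q^{-1/33}$, which is how the paper removes Zhelezov's restriction $|H|\ll q^{1/2}$. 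Both hypotheses~\eqref{eqn:energycnd1} and~\eqref{eqn:energycnd2} lead to the same two-term conclusion; their only role is to verify, in two different ways, the non-concentration condition $|A'\cap cG|\leq\max\{C|G|^{1/2},\eta|A'|\}$ required to invoke Lemma~\ref{lem:sum-ratio} on $A'$ (with~\eqref{eqn:energycnd1} one takes $\lambda$ small enough that $\lambda|A|^{1-\delta_*}<|A'|/8$; with~\eqref{eqn:energycnd2} the condition holds outright, which is why $\delta=1/33$ is then permitted). Your statement that the large-$|A'|$ branch ``forces $|G|$ to be a fixed power of $q$'' and that~\eqref{eqn:energycnd2} ``closes out'' that branch has no counterpart in a correct argument.
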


As a consequence of Theorem~\ref{thm:AddEnergy}, we obtain the following estimates on Gauss sums.
\begin{theorem}
\label{thm:GaussSumViaEnergy}
Let $n$ denote an integer such that $n | (q-1)$ and fix $\delta \leq 1/33$. There exists an absolute constant $\lambda > 0$ such that if $n$ satisfies
\begin{equation}
\label{eqn:gausssumcnd1}
\gcd \bigg(n, \frac{q -1}{p^\nu - 1}\bigg)  \leq  \lambda \frac{n^{\delta}q^{1-\delta}}{p^{\nu}}
\end{equation}
for all $\nu$ with $1\leq \nu < m$, $\nu | m$, then we have the estimates
\begin{equation}
\label{eqn:GSBound1}
\max_{a\in\F_q^{*}} |S_n(a)| \ll q^{(3-\delta)/4}n^{(2+\delta)/4} + q^{3/4}n^{65/132}
\end{equation}
and
\begin{equation}
\label{eqn:GSBound2}
\max_{a\in\F_q^{*}} |S_n(a)| \ll q^{(7-2\delta)/8}n^{(1+\delta)/4} + q^{7/8}n^{8/33}.
\end{equation}
Moreover, estimates~\eqref{eqn:GSBound1} and~\eqref{eqn:GSBound2} hold with $\delta = 1/33$ if $n$ satisfies
\begin{equation}
\label{eqn:gausssumcnd2}
\gcd \bigg(n, \frac{q -1}{p^\nu - 1}\bigg)  \ll  \frac{n}{p^{\nu/2}}
\end{equation}
for all $\nu$ with $1\leq \nu < m$, $\nu | m$.
\end{theorem}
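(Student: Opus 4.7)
The plan is to apply Theorem~\ref{thm:AddEnergy} to $H=\{x^{n}:x\in\F_{q}^{*}\}$, the multiplicative subgroup of $n$-th powers (so $|H|=(q-1)/n$), and then to pass from the resulting energy estimate to a Gauss sum estimate through the identity $S_{n}(a)=1+nS(a,H)$ from~\eqref{eqn:EStoGS}. First I translate the hypothesis on $n$ into a hypothesis on $H$: by~\eqref{eqn:subfieldintersection}, for any proper subfield $G$ of size $p^{\nu}$,
$$|H\cap G|=\frac{\gcd(n,(q-1)/(p^{\nu}-1))(p^{\nu}-1)}{n},$$
so~\eqref{eqn:gausssumcnd1} rearranges to $|H\cap G|\leq\lambda|H|^{1-\delta}$, while~\eqref{eqn:gausssumcnd2} rearranges to $|H\cap G|\ll|G|^{1/2}$. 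Since $H$ is a subgroup, for any $c\in\F_{q}^{*}$ the set $H\cap cG$ is either empty or a coset of $H\cap G^{*}$ in $H$, so $|H\cap cG|\leq|H\cap G|$, and the same bounds pass to arbitrary translates. The ratio hypothesis $|H/H|=|H|$ of Theorem~\ref{thm:AddEnergy} is automatic, hence that theorem applies and yields
$$E_{+}(H)\ll\max\bigl\{|H|^{3-\delta},\ |H|^{3+1/33}/q^{1/33}\bigr\}.$$

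For~\eqref{eqn:GSBound1}, I use the standard inequality $|S(a,H)|^{4}\leq qE_{+}(H)/|H|$, which follows by combining Parseval $\sum_{a\in\F_{q}}|S(a,H)|^{4}=qE_{+}(H)$ with the coset invariance $|S(ah,H)|=|S(a,H)|$ for $h\in H$ (which forces $|H|$ of the Parseval terms to coincide). Inserting both branches of the energy bound, substituting $|H|=(q-1)/n$, and applying $|S_{n}(a)|\leq 1+n|S(a,H)|$ produces the two summands of~\eqref{eqn:GSBound1} after a routine calculation.

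For the sharper estimate~\eqref{eqn:GSBound2} I upgrade the previous inequality to $|S(a,H)|^{4}\ll q^{1/2}E_{+}(H)$. In the regime $|H|\geq q^{1/2}$ this is already a direct consequence of the Parseval--coset bound above. In the regime $|H|\leq q^{1/4}$ it follows from Cauchy--Schwarz applied to $|S(a,H)|^{2}=\sum_{c\in H-H}r(c)\psi_{a}(c)$ together with the trivial $|H-H|\leq|H|^{2}\leq q^{1/2}$. In the intermediate regime $q^{1/4}<|H|<q^{1/2}$ I invoke the paper's trilinear exponential sum estimate (following~\cite[Theorem~1.2]{BouGar}): exploiting the cyclic structure of $\F_{q}^{*}$ one factors $H$ as a triple product $XYZ$ so that $S(a,H)$ becomes a trilinear sum of the form~\eqref{eqn:IntroTrilinear}, and the quantitative saving of that bound, combined with the subfield hypothesis, recovers $|S(a,H)|^{4}\ll q^{1/2}E_{+}(H)$ in this range too. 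Substituting both branches of the energy bound and repeating the previous arithmetic then delivers both summands of~\eqref{eqn:GSBound2}.

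The main obstacle will be the intermediate regime $q^{1/4}<|H|<q^{1/2}$ in the proof of~\eqref{eqn:GSBound2}: neither the Parseval--coset bound nor the Cauchy--Schwarz bound is sharp enough there on its own, and one genuinely needs the explicit trilinear exponential sum estimate developed earlier in the paper. Once that input is available, the remaining work is careful bookkeeping of exponents to extract the constants $(7-2\delta)/8$, $(1+\delta)/4$, $7/8$, and $8/33$ appearing in the statement.
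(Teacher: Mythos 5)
Your overall structure matches the paper's: translate \eqref{eqn:gausssumcnd1}/\eqref{eqn:gausssumcnd2} into subfield-intersection conditions on the group $H$ of $n$th powers via \eqref{eqn:subfieldintersection} and the coset observation $H\cap cG=d(H\cap G)$ (this is exactly the paper's Claim~\ref{claim:EquivalencyofConditions}), apply Theorem~\ref{thm:AddEnergy}, convert the energy bound into an exponential sum bound, and finish with \eqref{eqn:EStoGS}. Your derivation of \eqref{eqn:GSBound1} from $|S(a,H)|^{4}\leq qE_{+}(H)/|H|$ is fine. The gap is in your treatment of \eqref{eqn:GSBound2}. The inequality you need, $|S(a,H)|^{4}\ll q^{1/2}E_{+}(H)$, holds for \emph{all} sizes of $H$ with no case analysis: it is Lemma~\ref{lem:gausssumenergy} (equivalently, apply \eqref{eqn:BH2} with $X=aH$, $Y=H$, after writing $S(a,H)=|H|^{-1}\sum_{x\in aH}\sum_{y\in H}\psi(xy)$ and noting $E_{+}(aH)=E_{+}(H)$ since dilation preserves additive energy). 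Your plan to cover the intermediate range $q^{1/4}<|H|<q^{1/2}$ by the trilinear estimate of Theorem~\ref{thm:TrilinearSumsViaEnergy} does not work for two reasons. First, that theorem requires the condition $|X\cap cG|\ll|G|^{1/2}$, which is not implied by hypothesis \eqref{eqn:gausssumcnd1} when $\delta<1/33$ (only the weaker $|H\cap cG|\leq\lambda|H|^{1-\delta}$ is available), so under \eqref{eqn:gausssumcnd1} the trilinear input is simply not admissible. Second, even under \eqref{eqn:gausssumcnd2}, Corollary~\ref{cor:TrilinearEqSizes} gives $|S(a,H)|\ll q^{9/32+o(1)}\bigl(|H|^{55/128}+q^{-1/128}|H|^{57/128}\bigr)$, which is of a different shape and is strictly weaker than the target $q^{1/8}|H|^{(3-\delta)/4}$ (or $q^{1/8}E_{+}(H)^{1/4}$) throughout $q^{1/4}<|H|<q^{1/2}$; e.g.\ at $|H|=q^{1/4}$ it gives roughly $q^{0.39}$ versus the needed $q^{0.31}$. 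So the claim that the trilinear bound ``recovers'' $|S(a,H)|^{4}\ll q^{1/2}E_{+}(H)$ in that range is unsubstantiated and in fact false; the trilinear machinery belongs to the separate result Theorem~\ref{thm:GaussSumViaTrilinearSum}, not to this one.

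The repair is short: drop the three-regime analysis entirely and quote (or reprove, as sketched above) Lemma~\ref{lem:gausssumenergy}, whose second term $q^{1/8}E_{+}(H)^{1/4}$ combined with the two branches of \eqref{eqn:EnergyBound} gives \eqref{eqn:ExpSumBound2} and hence \eqref{eqn:GSBound2} directly, under either hypothesis on $n$.
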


Estimate \eqref{eqn:GSBound2} is stronger than \eqref{eqn:GSBound1} for $n > q^{1/2}$. Let $\delta = 1/33$. Then, for any fixed sufficiently small $\epsilon >0$, \eqref{eqn:GSBound2} is nontrivial for any $n \ll q^{1/2 +1/68 -\epsilon}$ which satisfies either of the restrictions \eqref{eqn:gausssumcnd1} or \eqref{eqn:gausssumcnd2}. In this range, \eqref{eqn:GSBound2} also improves on \eqref{eqn:ZhelGSbound}. It is also worth noting that for $n\leq q^{1/2}$, estimate \eqref{eqn:GSBound1} improves on the classical Weil bound \eqref{eqn:WeilBound} in the range $n \gg q^{1/2 -1/134}$.

\begin{theorem}
\label{thm:IncSums}
Let $g\in\F_q^*$ be of multiplicative order $t$. Suppose that $t$ satisfies
\begin{equation}
\label{eqn:IncSumCon}
\gcd(t, p^\nu - 1) \ll p^{\nu/2}
\end{equation}
for all $\nu$ with $1\leq \nu < m$, $\nu | m$. Then, for all $K \leq t$, we have the estimates
\begin{equation}
\label{eqn:IncPars}
\sum_{a\in \F_q}\Big|\sum_{j\leq K}\psi_a(g^j)\Big|^{4} \ll qK^{3-1/33} + q^{1-1/33}K^{3+1/33},
\end{equation}
\begin{equation}
\label{eqn:IncSum1}
\max_{a\in\F_q^{*}}\Big|\sum_{j\leq K}\psi_a(g^j)\Big| \ll q^{1/4}K^{65/132} + q^{8/33}K^{67/132},
\end{equation}
\begin{equation}
\label{eqn:IncSum2}
\max_{a\in\F_q^{*}}\Big|\sum_{j\leq K}\psi_a(g^j)\Big| \ll q^{1/8}K^{49/66} + q^{31/264}K^{25/33}.
\end{equation}
\end{theorem}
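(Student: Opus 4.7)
The plan is to set $A = \{g^j : 1 \leq j \leq K\}$, verify that $A$ satisfies the hypotheses of Theorem~\ref{thm:AddEnergy} under~\eqref{eqn:IncSumCon}, apply the resulting additive energy bound through Parseval to obtain~\eqref{eqn:IncPars}, and then derive the pointwise estimates~\eqref{eqn:IncSum1} and~\eqref{eqn:IncSum2} via shift-and-averaging arguments that exploit the approximate multiplicative translation-invariance of $T(a) = \sum_{j \leq K} \psi_a(g^j)$.

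Since $K \leq t$, the elements $g^j$ for $j \in [1,K]$ are distinct, so $|A| = K$. Clearly $|A/A| \leq 2K - 1 \approx |A|$. For any proper subfield $G$ of size $p^\nu$ and any $c \in \F_q^*$: if some $h_0$ lies in $A \cap cG$, then $h_0 \in H := \langle g \rangle$ and $h_0/c \in G$, so $cG = h_0 G$; hence $A \cap cG \subseteq H \cap h_0 G = h_0(H \cap G)$, giving $|A \cap cG| \leq |H \cap G^*| = \gcd(t, p^\nu - 1) \ll p^{\nu/2} = |G|^{1/2}$ by hypothesis~\eqref{eqn:IncSumCon}. Applying Theorem~\ref{thm:AddEnergy} with $\delta = 1/33$ yields
\[ E_+(A) \ll K^{3-1/33} + K^{3+1/33}/q^{1/33}. \]
Substituting this into the Parseval identity $\sum_{a\in\F_q}|T(a)|^4 = q E_+(A)$ proves~\eqref{eqn:IncPars}.

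To obtain~\eqref{eqn:IncSum1}, I would observe that $|T(ag^\ell) - T(a)| \leq 2\ell$ for every $|\ell| \leq K$, since multiplication by $g^\ell$ translates the summation interval $[1,K]$ by $\ell$ and alters at most $2\ell$ terms. Averaging over $\ell = 1, \ldots, K$ yields $KT(a) = \sum_{\ell=1}^K T(ag^\ell) + O(K^2)$, so by H\"older's inequality
\[ K^4 |T(a)|^4 \ll K^3 \sum_{\ell=1}^K |T(ag^\ell)|^4 + K^8 \leq K^3 \sum_{a' \in \F_q^*} |T(a')|^4 + K^8, \]
using that the $K$ shifts are distinct elements of $\F_q^*$. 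Dividing by $K^4$ and substituting~\eqref{eqn:IncPars} produces~\eqref{eqn:IncSum1}, with the $O(K)$ boundary contribution absorbed by the trivial estimate $|T(a)| \leq K$ outside the nontrivial regime.

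The estimate~\eqref{eqn:IncSum2} is equivalent to the inequality $|T(a)|^4 \ll q^{1/2}\, E_+(A)$, representing an additional $q^{1/2}$-saving in passing from the $L^4$ bound to the pointwise maximum. When $K \geq q^{1/2}$, this follows by re-running the shift-and-H\"older argument above with the parameter $L = q^{1/2}$ in place of $K$, which is permissible because the Lipschitz property extends to shifts of magnitude up to $K$. When $K < q^{1/2}$, the shift trick no longer reaches the full $q^{1/2}$-range, and one must instead combine the Lipschitz-on-cosets observation (giving $\sum_{a'}|T(a')|^4 \gg M^5$ for $M = \max_a |T(a)|$) with the identity $\sum_{a'} |T(a')|^2 = qK$ and the energy bound via a careful interpolation. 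The principal obstacle is precisely this small-$K$ regime: while the various $L^p$ inequalities for $T$ are straightforward individually, interlacing them to recover the sharp exponents in~\eqref{eqn:IncSum2} requires a delicate matching of parameters and is the most technical step of the argument.
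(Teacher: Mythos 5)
Your treatment of \eqref{eqn:IncPars} is correct and is essentially the paper's argument: the subfield condition is transferred from $H=\langle g\rangle$ to $A=\{g^j\}_{j\le K}$ exactly as in Claim~\ref{claim:EquivalencyofConditions}, Theorem~\ref{thm:AddEnergy} (with condition \eqref{eqn:energycnd2}) bounds $E_+(A)$, and Parseval gives the fourth-moment estimate.

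The pointwise bounds are where the proposal has a genuine gap. Writing $T(a)=\sum_{j\le K}\psi_a(g^j)$, your shift-and-average step treats the boundary error trivially: averaging over $\ell\le K$ gives $|T(a)|^4\ll K^{-1}\sum_{a'}|T(a')|^4+K^4$, and the $K^4$ term is exactly the fourth power of the trivial bound $|T(a)|\le K$. Since \eqref{eqn:IncSum1} is below the trivial bound only when $K\gg q^{33/67}$, the error term dominates the target throughout the entire nontrivial range, so your argument proves \eqref{eqn:IncSum1} only where it is already trivial; it cannot be ``absorbed''. Shortening the shift range to $\ell\le L$ and optimising $L$ gives $\max_a|T(a)|\ll (qE_+(A)/L)^{1/4}+L\ll (qE_+(A))^{1/5}$, which is still weaker than \eqref{eqn:IncSum1} (at $K\approx q^{1/2}$ it yields $q^{82/165}$ against the claimed $q^{131/264}$). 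The paper's proof avoids this loss by bounding the boundary contribution not by its trivial size but by $2\sigma(J)$, where $\sigma(J)$ is the maximum over all incomplete sums of length at most $J$ and $J=[K/4]$; this produces the recursion $\sigma(K)\le 2\sigma(K/4)+O(B(K))$, closed by induction, while the averaged double sum $J^{-1}\sum_{j\le J}\sum_{k\le K}\psi_a(g^{j+k})$ is estimated for each fixed $a$ by Lemma~\ref{lem:BilinearHolder} combined with Theorem~\ref{thm:AddEnergy}. For \eqref{eqn:IncSum2} the situation is worse: your reformulation as $|T(a)|^4\ll q^{1/2}E_+(A)$ is the right target, but you offer no proof of it for $K<q^{1/2}$ (and say as much), and for $K$ near $q^{1/2}$ the $O(q^{1/2})$ boundary error again exceeds the target $q^{1/8}K^{49/66}+q^{31/264}K^{25/33}$. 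The missing idea is that \eqref{eqn:IncSum2} follows from applying \eqref{eqn:BH2} to the same averaged double sum, which exploits the additive energies of both the long block $\{g^k\}_{k\le K}$ and the short block $\{g^j\}_{j\le J}$ simultaneously; no manipulation of the single identity $\sum_a|T(a)|^4=qE_+(A)$, which is what all of your estimates reduce to, will produce that additional saving.
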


Our next result on trilinear exponential sums extends the arguments of \cite[Theorem~1.2]{BouGar} to arbitrary finite fields.
\begin{theorem}
\label{thm:TrilinearSumsViaEnergy}
Let $X, Y, Z \subset \F_q$ with $|X| \geq |Y| \geq |Z|$ and suppose that for all proper subfields $G$ of $\F_q$ and elements $c \in \F_q$, we have
\begin{equation}
\label{eqn:trilinearantifield}
|X \cap cG| \ll |G|^{1/2}.
\end{equation}
For any complex valued weights $(\alpha_x)_{x\in X}$, $(\beta_y)_{y\in Y}$ and $(\gamma_z)_{z\in Z}$, with 
\begin{equation}
\label{eqn:WeightsC}
\max_{x\in X}|\alpha_x| \leq 1,\quad \max_{y\in Y}|\beta_y| \leq 1,\quad \max_{z\in Z}|\gamma_z| \leq 1,
\end{equation}
we have
\begin{align}
\label{eqn:TrilinearB}
\bigg|\sum_{x\in X}\sum_{y\in Y} \sum_{z \in Z}\alpha_{x}\beta_{y}\gamma_{z}\psi(xyz)\bigg| 
&\ll q^{9/32 + o(1)} |X|^{95/128} |Y|^{3/4}|Z|^{15/16} \nonumber
\\ &+ q^{17/58 + o(1)} |X|^{43/58} |Y|^{87/116}|Z|^{53/58} 
\\ &+ q^{35/128 + o(1)} |X|^{97/128} |Y|^{3/4}|Z|^{15/16}. \nonumber
\end{align}
\end{theorem}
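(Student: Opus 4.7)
The plan is to adapt the proof of \cite[Theorem~1.2]{BouGar}, which was carried out in $\F_p$, to the setting of arbitrary $\F_q$ by substituting Theorem~\ref{thm:AddEnergy} for the prime-field sum-product estimate used there.

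Writing $T$ for the trilinear sum on the left-hand side of \eqref{eqn:TrilinearB} and exploiting the ordering $|X| \geq |Y| \geq |Z|$, I would first apply Cauchy-Schwarz in the variable of largest cardinality, obtaining
\[
|T|^{2} \leq |X| \sum_{x\in X} \bigg| \sum_{y\in Y,\, z\in Z} \beta_y \gamma_z \psi(xyz) \bigg|^{2}.
\]
Expanding the inner square and reorganising the sums produces an expression in the character sums $\sum_{x\in X}\psi(x(y_1 z_1 - y_2 z_2))$. The diagonal contribution from $y_1 z_1 = y_2 z_2$ is controlled by the multiplicative energy of $Y$ and $Z$, which is trivially bounded by $|Y||Z|^{2}$, while the off-diagonal part can be dualised by further iterations of Cauchy-Schwarz and H\"older together with Parseval's identity to the fourth moment $\sum_{\xi\in\F_q}\bigl|\sum_{x\in X}\psi(x\xi)\bigr|^{4} = q\, E_{+}(X)$; the specific iteration scheme (distributing moments in $Y$ versus $Z$ asymmetrically) is what produces exponents such as $|Z|^{15/16}$, indicating that $Z$ is moved through several rounds of squaring.

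At this stage the anti-subfield hypothesis \eqref{eqn:trilinearantifield} coincides with assumption \eqref{eqn:energycnd2} of Theorem~\ref{thm:AddEnergy}, so that theorem applies with $\delta = 1/33$, giving
\[
E_{+}(X) < \max\bigg\{ |X|^{3-1/33},\; q^{-1/33}\,|X|^{3+1/33} \bigg\}.
\]
The two cases of this maximum should account for two of the three summands in \eqref{eqn:TrilinearB}. The remaining summand should emerge from a complementary handling of the bilinear exponential sums $\sum_{y\in Y, z\in Z}\psi(\lambda y z)$ (for $\lambda\neq 0$) arising in the off-diagonal: estimating them directly by further Cauchy-Schwarz and a Weil-type bound, rather than routing them through $E_{+}(X)$, is more efficient in a different parameter regime. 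Balancing the two routes, together with a standard dyadic decomposition of the complex weights $\alpha_x,\beta_y,\gamma_z$ (which contributes the $q^{o(1)}$ loss), yields the bound \eqref{eqn:TrilinearB}.

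The main technical challenge will be the bookkeeping of exponents across the multiple applications of Cauchy-Schwarz, H\"older and Pl\"unnecke--Ruzsa inequalities; the unusual denominators $32$, $58$ and $128$ in \eqref{eqn:TrilinearB} indicate that the optimum is attained at a delicate balance between the two cases of Theorem~\ref{thm:AddEnergy} and the alternative bilinear estimate, and locating this balance (while correctly propagating the $\delta=1/33$ savings through all the intermediate Cauchy-Schwarz steps) is the principal combinatorial task.
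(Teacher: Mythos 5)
Your plan is routed through the wrong quantity, and the step on which it hinges is not available. You propose to reduce the trilinear sum to $\sum_{\xi}\bigl|\sum_{x\in X}\psi(x\xi)\bigr|^{4}=qE_{+}(X)$ and then bound $E_{+}(X)$ by Theorem~\ref{thm:AddEnergy} with $\delta=1/33$. But Theorem~\ref{thm:AddEnergy} requires the ratio-set hypothesis $|A/A|\approx|A|$, which the set $X$ in Theorem~\ref{thm:TrilinearSumsViaEnergy} is not assumed to satisfy; the only hypothesis on $X$ is \eqref{eqn:trilinearantifield}. Indeed no nontrivial bound on $E_{+}(X)$ can hold under \eqref{eqn:trilinearantifield} alone: when $q=p$ is prime the subfield condition is vacuous and $X$ may be an arithmetic progression with $E_{+}(X)\gg|X|^{3}$, yet \eqref{eqn:TrilinearB} must still (and does) hold for such $X$. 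So any argument that feeds an a priori bound on $E_{+}(X)$ into Cauchy--Schwarz/H\"older cannot prove the theorem, and your further suggestions (a ``Weil-type bound'' for $\sum_{y\in Y,z\in Z}\psi(\lambda yz)$ over arbitrary sets, which does not exist --- the relevant tool is the $\sqrt{q|Y||Z|}$ bound of Lemma~\ref{lem:ClassicalBilinearSum}; the claim that the two cases of Theorem~\ref{thm:AddEnergy} give two of the three summands) do not repair this.

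The paper's proof, which genuinely follows Bourgain--Garaev, works in the opposite direction: it \emph{assumes} the trilinear sum is large, uses the dyadic pigeonholing of Lemma~\ref{lem:dyadic} and Cauchy--Schwarz to produce a graph $G\subset A\times A$ (with $A\subset X$) along which $|A\overset{G}{-}A|$ is small, then applies the Balog--Szemer\'edi--Gowers theorem (Lemma~\ref{BSG}) to extract $A'\subset A$ with a small difference set $|A'-A'|$. The sum--product input is then Lemma~\ref{lem:energybound}, which bounds the \emph{multiplicative} energy $E_{\times}(A')$ in terms of $|A'-A'|$ under exactly the anti-subfield condition \eqref{eqn:trilinearantifield}; combined with Lemma~\ref{lem:ClassicalBilinearSum} via $E_{\times}(A',Y)\le|Y|^{3/2}E_{\times}(A')^{1/2}$ this yields the bound, and the three summands in \eqref{eqn:TrilinearB} come from the three terms in \eqref{eqn:MEB}, not from Theorem~\ref{thm:AddEnergy}, which is not used here at all. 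Your proposal is missing precisely these ingredients (pigeonholing, BSG, and the multiplicative-energy/difference-set estimate), and its exponent bookkeeping, which would have to reproduce the denominators $128$ and $58$ arising from the BSG losses and \eqref{eqn:MEB}, is left entirely unexecuted.
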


\begin{cor}
\label{cor:TrilinearEqSizes}
Let $X, Y, Z \subset \F_q$ with $|X| = |Y| = |Z|$ and suppose that for all proper subfields $G$ of $\F_q$ and elements $c \in \F_q$, $X$ satisfies \eqref{eqn:trilinearantifield}. For any complex valued weights $(\alpha_x)_{x\in X}$, $(\beta_y)_{y\in Y}$ and $(\gamma_z)_{z\in Z}$ satisfying \eqref{eqn:WeightsC} we have
\[
\bigg|\sum_{x\in X}\sum_{y\in Y} \sum_{z \in Z}\alpha_{x}\beta_{y}\gamma_{z}\psi(xyz)\bigg|  \ll q^{9/32 + o(1)}\big(|X|^{311/128} + q^{-1/128}|X|^{313/128}\big).
\]
\end{cor}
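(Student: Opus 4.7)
The plan is to obtain the corollary as a direct specialization of Theorem~\ref{thm:TrilinearSumsViaEnergy} to the case $|X|=|Y|=|Z|=N$, followed by some elementary bookkeeping of exponents. After substitution, the three terms on the right-hand side of \eqref{eqn:TrilinearB} become
\[
T_1 = q^{9/32+o(1)} N^{311/128},\qquad T_2 = q^{17/58+o(1)} N^{279/116},\qquad T_3 = q^{35/128+o(1)} N^{313/128},
\]
since $95/128+3/4+15/16 = 311/128$, $43/58+87/116+53/58 = 279/116$ and $97/128+3/4+15/16 = 313/128$. I would then rewrite $T_3$ using the identity $35/128 = 9/32 - 1/128$ to obtain the factor $q^{-1/128}|X|^{313/128}$ that appears in the statement, so that the sum $T_1 + T_3$ matches the quantity $q^{9/32+o(1)}(|X|^{311/128} + q^{-1/128}|X|^{313/128})$ claimed in the corollary.

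The only real content is therefore to show that the middle term $T_2$ is absorbed into $T_1$ (say). I would split by size of $N$. In the small regime, say $N \leq q^{36/73}$, the first term $T_1$ already exceeds the trivial bound $N^3$ for the full sum (since $q^{9/32} \geq N^{3-311/128} = N^{73/128}$), so the claimed estimate holds trivially. In the complementary regime $N \geq q^{36/73}$, I would compare $T_2$ with $T_1$ directly: the inequality $T_2 \leq T_1$ reduces (after computing $17/58-9/32 = 11/928$ and $311/128-279/116 = 91/3712$) to $N^{91} \geq q^{44}$, i.e.\ $N \geq q^{44/91}$. Since $36/73 > 44/91$ (equivalently $36\cdot 91 = 3276 > 3212 = 44\cdot 73$), this is automatic in the range under consideration.

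No step should present a real obstacle; the main care is in keeping track of the fractions and in handling the $q^{o(1)}$ factors, which only absorb into each other harmlessly. The condition \eqref{eqn:trilinearantifield} is inherited verbatim from Theorem~\ref{thm:TrilinearSumsViaEnergy} and plays no additional role in the corollary beyond guaranteeing that the theorem applies. Combining the two regimes yields the stated bound.
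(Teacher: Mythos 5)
Your proposal is correct and follows essentially the same route as the paper: both absorb the middle term of \eqref{eqn:TrilinearB} by a two-case comparison resting on the same numerical fact $44/91 < 36/73$, the only difference being that you place the case split at $|X| = q^{36/73}$ (where the trivial bound $|X|^3$ matches the first term) while the paper splits at the point where the second term would begin to dominate the first and then invokes the trivial bound there. All your exponent computations ($311/128$, $279/116$, $313/128$, $11/928$, $91/3712$) check out, so no further comment is needed.
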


Applying Corollary~\ref{cor:TrilinearEqSizes}, one obtains the following estimate of Gauss sums.
\begin{theorem}
\label{thm:GaussSumViaTrilinearSum}
For any $n|(q-1)$ satisfying~\eqref{eqn:gausssumcnd2} we have
\[
\max_{a\in\F_q^{*}} |S_n(a)| \ll q^{91/128 + o(1)}n^{73/128} + q^{92/128+o(1)}n^{71/128}.
\]
\end{theorem}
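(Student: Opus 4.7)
The plan is to express $S_n(a)$ as a trilinear sum over the subgroup $H$ of $n$th powers, then apply Corollary~\ref{cor:TrilinearEqSizes}. First I would use \eqref{eqn:EStoGS}, which reduces the problem to bounding $|S(a,H)|$ for $H = \{x^n : x \in \F_q^*\}$, a subgroup of order $|H| = (q-1)/n$. Since $H \cdot H \cdot H = H$ and every $w \in H$ has exactly $|H|^2$ representations as $xyz$ with $x,y,z \in H$, we have the identity
\[
\sum_{x \in aH}\sum_{y \in H}\sum_{z \in H} \psi(xyz) \;=\; |H|^{2}\, S(a, H),
\]
so it suffices to bound the left-hand trilinear sum, taking $X = aH$, $Y = Z = H$ and all weights equal to $1$.

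Next I would verify the anti-subfield hypothesis \eqref{eqn:trilinearantifield} for $X = aH$. Because $a \in \F_q^*$ and every coset intersection $aH \cap cG$ is a translate of an intersection of the form $H \cap G$ (checked by picking $h_0 \in aH \cap cG$ and noting that $cG = ah_0' G$ for some $h_0' \in H$, which forces $|aH \cap cG| \le |H \cap G^*|$), the condition reduces to $|H \cap G| \ll |G|^{1/2}$ for all proper subfields $G = \F_{p^\nu}$. Using the exact formula \eqref{eqn:subfieldintersection} for $|H \cap G|$ together with the hypothesis \eqref{eqn:gausssumcnd2}, one obtains
\[
|H \cap G| \;=\; \frac{\gcd\!\bigl(n, \tfrac{q-1}{p^\nu - 1}\bigr)(p^\nu - 1)}{n} \;\ll\; \frac{(n/p^{\nu/2})(p^\nu - 1)}{n} \;\ll\; p^{\nu/2} \;=\; |G|^{1/2},
\]
as required.

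Finally I would apply Corollary~\ref{cor:TrilinearEqSizes} with $|X| = |Y| = |Z| = |H|$, dividing through by $|H|^2$ to get
\[
|S(a, H)| \;\ll\; q^{9/32 + o(1)}\bigl(|H|^{311/128 - 2} + q^{-1/128}|H|^{313/128 - 2}\bigr) \;=\; q^{9/32 + o(1)}\bigl(|H|^{55/128} + q^{-1/128}|H|^{57/128}\bigr).
\]
Substituting $|H| \ll q/n$ and inserting the result into $|S_n(a)| \le 1 + n|S(a,H)|$ via \eqref{eqn:EStoGS} gives the two claimed terms $q^{91/128 + o(1)} n^{73/128}$ and $q^{92/128 + o(1)} n^{71/128}$ after routine arithmetic (noting $9/32 = 36/128$ and $1 - 55/128 = 73/128$, $1 - 57/128 = 71/128$). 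The proof is essentially a reduction, so the main obstacle is simply confirming the subfield-avoidance condition for the coset $aH$; no sum-product work is needed beyond what is already packaged in Corollary~\ref{cor:TrilinearEqSizes}.
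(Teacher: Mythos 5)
Your proposal is correct and follows essentially the same route as the paper: the identity $|H|^{2}S(a,H)=\sum_{x\in aH}\sum_{y\in H}\sum_{z\in H}\psi(xyz)$, verification of the subfield condition for the coset via $|H\cap G|\ll |G|^{1/2}$ (the paper packages this as Claim~\ref{claim:EquivalencyofConditions} together with \eqref{eqn:subfieldintersection}), an application of Corollary~\ref{cor:TrilinearEqSizes}, and the conversion through \eqref{gausssumidentity2} with $|H|\approx q/n$. Your exponent bookkeeping matches the stated bound.
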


\begin{cor}
Combining Theorem~\ref{thm:GaussSumViaEnergy} and Theorem~\ref{thm:GaussSumViaTrilinearSum}, for any $n|(q-1)$ satisfying~\eqref{eqn:gausssumcnd2}, and any sufficiently small $\epsilon >0$, we have
\begin{align*}
\label{combined}
\max_{a\in \F_q^{*}} |S_n(a)| \ll  \begin{cases}q^{1/2}n,  &\text{if} \quad  n \lesssim q^{1/2 - 1/130}, \\  q^{92/128+o(1)}n^{71/128}, &\text{if} \quad q^{1/2 - 1/130} \lesssim n \ll q^{1/2}, \\ q^{91/128 + o(1)}n^{73/128},  &\text{if} \quad q^{1/2}\ll n \lesssim q^{1/2 + 1/2642}, \\ q^{229/264}n^{17/66}, &\text{if} \quad q^{1/2 + 1/2642} \lesssim n \ll q^{1/2 + 1/68-\epsilon}.\end{cases}
\end{align*}
\end{cor}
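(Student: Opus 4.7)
The plan is to obtain the four-piece bound as the pointwise minimum of the estimates now at our disposal under hypothesis~\eqref{eqn:gausssumcnd2}: Weil's bound~\eqref{eqn:WeilBound}, estimates~\eqref{eqn:GSBound1} and~\eqref{eqn:GSBound2} of Theorem~\ref{thm:GaussSumViaEnergy} specialised to $\delta = 1/33$, and Theorem~\ref{thm:GaussSumViaTrilinearSum}. No new analytic input is required; the argument is essentially a careful case analysis.

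The first step is to reduce each two-term estimate to its controlling summand. A direct algebraic comparison shows that \eqref{eqn:GSBound1} (at $\delta = 1/33$), \eqref{eqn:GSBound2} (at $\delta = 1/33$), and the trilinear bound $q^{91/128+o(1)}n^{73/128}+q^{92/128+o(1)}n^{71/128}$ each have their two summands equal exactly at $n = q^{1/2}$, where they swap dominance. Hence for $n \leq q^{1/2}$ the trilinear bound is controlled by $q^{92/128+o(1)}n^{71/128}$, while for $n \geq q^{1/2}$ it is controlled by $q^{91/128+o(1)}n^{73/128}$; similarly, for $n \geq q^{1/2}$ estimate~\eqref{eqn:GSBound2} reduces to $q^{229/264}n^{17/66}$. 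This pins down the middle breakpoint $n = q^{1/2}$.

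The two remaining breakpoints follow by equating pairs of bounds. Setting Weil's $q^{1/2}n$ equal to the branch $q^{98/132}n^{67/132}$ of~\eqref{eqn:GSBound1} yields $n^{65/132} = q^{32/132}$, so $n = q^{32/65} = q^{1/2-1/130}$, giving the lower breakpoint. Equating the post-$q^{1/2}$ expressions $q^{91/128}n^{73/128}$ and $q^{229/264}n^{17/66}$, and reducing to the common denominator $4224$, yields $n^{1321}=q^{661}$; that is, $n = q^{1/2+1/2642}$. Finally, $q^{229/264}n^{17/66} = q$ holds at $n = q^{35/68} = q^{1/2+1/68}$, pinning the upper cutoff $n \ll q^{1/2+1/68-\epsilon}$.

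Assembling the four resulting ranges yields the stated piecewise bound. The main obstacle is simply the bookkeeping of which summand controls each two-term estimate on either side of $n = q^{1/2}$; the $q^{o(1)}$ factors in Theorem~\ref{thm:GaussSumViaTrilinearSum}, together with any subpolynomial slack at the transition points, are absorbed by the $\lesssim$ notation in the statement.
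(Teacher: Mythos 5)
Your proposal is correct and matches the paper's (implicit) argument: the corollary carries no separate proof and is exactly this case analysis, taking the valid bound in each range — Weil, the trilinear bound's dominant term on either side of $n=q^{1/2}$, and the first term of \eqref{eqn:GSBound2} at $\delta=1/33$ — with the crossover computations ($n=q^{661/1321}=q^{1/2+1/2642}$, triviality at $n=q^{35/68}$) checking out. Note also that since each claimed estimate is a valid upper bound on its entire stated range, the exact provenance of the breakpoint $q^{1/2-1/130}$ (you obtain it, as the paper evidently did, by equating Weil with the branch $q^{49/66}n^{67/132}$ of \eqref{eqn:GSBound1} rather than with the regime-two bound $q^{92/128+o(1)}n^{71/128}$, whose true crossover with Weil is at $q^{1/2-1/114}$) affects only optimality of the piecewise minimum, not the truth of the statement.
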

\section{Preparations}
We require a sum-product type estimate due to Roche-Newton~\cite{Roche}. The following version can be found in \cite[Lemma~8]{Moh}.
\begin{lemma}
\label{lem:sum-ratio}
Let $A \subseteq \F_q$ and let $0 < \eta < 1/8$. Suppose $|A| \ll q^{1/2}$ and that
\begin{equation}
\label{eqn:SumRatioCondition}
 |A \cap cG| \leq \max \big\{C|G|^{1/2},\eta|A|\big\} 
\end{equation}
for all proper subfields $G$ of $\F_q$, elements $c\in \F_q$ and some constant $C > 0$. Then either
\[ 
|A - A|^7|A/A|^4 \gg_{\eta} |A|^{12} \qquad \text{or} \qquad |A - A|^6|A/A|^5 \gg_{\eta} |A|^{12}.
\]
If $|A| > \eta^{-1}q^{1/2}$, then
\[
 |A - A|^7|A/A|^4 \gg_{\eta} |A|^{10}q.
\]
\end{lemma}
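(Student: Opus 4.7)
The plan is to reduce the bound to a point-plane incidence problem in $\F_q^3$ and apply Rudnev's point-plane incidence theorem in its $\F_q$ form. I would set up an Elekes-Rudnev style configuration: for $(a,b,c)\in A^3$ and $s\in A/A$ satisfying $s(b-a)=c$, one obtains an incidence between a point built from an element of $A-A$ and one of $A/A$ and a plane determined by auxiliary parameters from $A$. After organising variables, the points lie in $P\subseteq (A-A)\times (A/A)$ with $|P|\le |A-A|\,|A/A|$, while the planes are parameterised by pairs or triples from $A$ so that $|\Pi|\approx |A|^{2}$. The natural solutions coming from $A\times A\times A$ give the lower bound $I\gg |A|^{3}$.

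The key analytic input is Rudnev's bound, which in the $\F_q$ setting (under the non-concentration hypothesis~\eqref{eqn:SumRatioCondition}) takes the shape
\[
I(P,\Pi) \;\ll\; |P|^{3/4}|\Pi|^{3/4} + k\max\{|P|,|\Pi|\} + \frac{|P|\,|\Pi|}{q},
\]
where $k$ is the maximum number of collinear points in $P$. The hypothesis~\eqref{eqn:SumRatioCondition} is tailored precisely so that a line carrying many points of $P$ would force $A$ to concentrate in an affine image of a proper subfield $G$, contradicting $|A\cap cG|\le\max\{C|G|^{1/2},\eta|A|\}$; this lets one take $k\lesssim\max\{C|G|^{1/2},\eta|A|\}$. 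Balancing the lower bound $|A|^{3}$ against the first two terms on the right with $|P|\asymp|A-A|\,|A/A|$ and $|\Pi|\asymp|A|^{2}$ yields the dichotomy $|A-A|^{7}|A/A|^{4}\gg_\eta|A|^{12}$ (from the $|P|^{3/4}|\Pi|^{3/4}$ term dominating) or $|A-A|^{6}|A/A|^{5}\gg_\eta|A|^{12}$ (from the $k\max\{|P|,|\Pi|\}$ term dominating), after optimising over the relative sizes of $|A-A|$ and $|A/A|$. In the regime $|A|>\eta^{-1}q^{1/2}$, the third term $|P|\,|\Pi|/q$ becomes the main contribution and transfers an extra factor of $q$ into the conclusion, giving $|A-A|^{7}|A/A|^{4}\gg_\eta|A|^{10}q$.

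The main obstacle is controlling the subfield contribution, i.e.\ establishing the bound on $k$ from the hypothesis~\eqref{eqn:SumRatioCondition}. Concretely, one must argue that a line in the $\F_q^3$ configuration supporting more than $O(\max\{C|G|^{1/2},\eta|A|\})$ points of $P$ forces an affine coset of a proper subfield of $\F_q$ that intersects $A$ too often, contradicting the assumption. This reduction to the subfield structure of $A$ is the technical heart of extending Rudnev's theorem from $\F_p$ to $\F_q$; once $k$ is under control, the two alternatives in the conclusion fall out of which term in the incidence bound dominates, and the remaining steps are standard Plünnecke--Ruzsa and parameter-balancing calculations.
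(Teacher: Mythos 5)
You should first note that the paper does not prove Lemma~\ref{lem:sum-ratio} at all: it is imported verbatim, attributed to Roche-Newton~\cite{Roche} in the form stated in \cite[Lemma~8]{Moh}. The known proof there proceeds by elementary additive-combinatorial means (Katz--Shen type covering arguments combined with Pl\"unnecke--Ruzsa inequalities, in the spirit of \cite{RoLi}), and the two alternative conclusions arise from a case analysis in that covering argument; these methods work over arbitrary $\F_{p^m}$ precisely because they never require an incidence theorem over the whole field. Your plan takes a genuinely different route, via Elekes--Rudnev point--plane incidences, and as written it has a real gap rather than being a complete alternative proof.

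The gap is the ``key analytic input'' itself. The inequality you invoke, $I(P,\Pi)\ll |P|^{3/4}|\Pi|^{3/4}+k\max\{|P|,|\Pi|\}+|P||\Pi|/q$, is not an available theorem: Rudnev's point--plane bound has the shape $I(P,\Pi)\ll |\Pi|\sqrt{|P|}+k|\Pi|$ (for $|P|\le|\Pi|$) and, crucially, requires $|P|=O(p^2)$ where $p$ is the \emph{characteristic}, not $q$. For $q=p^m$ with $p$ small this hypothesis fails badly for point sets of size about $|A-A||A/A|$, and no version is known in which the characteristic restriction can be traded for a non-concentration hypothesis such as \eqref{eqn:SumRatioCondition} together with a Vinh-type term $|P||\Pi|/q$; producing such a statement is an open problem, not a routine adaptation. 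Relatedly, your reduction of the collinearity parameter $k$ to the subfield condition is asserted rather than proved, and it conflates two distinct obstructions: the $k$-term in Rudnev's theorem concerns collinear points (ordinarily controlled by the Cartesian-product structure of $P$, not by subfields), while the restriction $|P|\ll p^2$ comes from the count over the Klein quadric and has nothing to do with cosets $cG$ meeting $A$. Since exactly this missing ingredient is what you yourself label the technical heart, the proposal does not establish the lemma; in addition, the derivation of the specific dichotomy $(7,4)$ versus $(6,5)$ and of the large-set case $|A|>\eta^{-1}q^{1/2}$ with the extra factor of $q$ is left as an unexecuted optimisation, so even granting the incidence bound the stated exponents are not verified.
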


For sets $A, B \subset \F_q$, we define the multiplicative energy $E_{\times}(A, B)$ as the multiplicative analogue of~\eqref{eqn:EnergyDefn} and denote $E_{\times}(A)= E_{\times}(A, A)$. It follows, from an application of the Cauchy-Schwarz inequality, that
\begin{equation}
\label{eqn:MECS}
E_{\times}(A, B) \leq E_{\times}(A)^{1/2}E_{\times}(B)^{1/2}.
\end{equation}
Our next lemma, which appears in \cite[Lemma~10]{Moh}, is a basic extension of \cite[Theorem~1.4]{RoLi}.
\begin{lemma}
\label{lem:energybound}
Let $A \subseteq \F_q$. Suppose that, for all elements $c$ and proper subfields $G$ in $\F_q$, we have $|A \cap cG| \ll |G|^{1/2}.$ Then we have the estimate
\begin{align}
\label{eqn:MEB}
E_{\times}(A) \ll \log|A| \cdot \max\{|A - A|^{7/4}|A|, |A - A|^{6/5}|A|^{8/5}, |A - A|^{7/4}|A|^{3/2}q^{-1/4}\}.
\end{align}
\end{lemma}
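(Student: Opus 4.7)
The bound is an $\F_q$-adaptation of the prime-field method of Roche-Newton and Li \cite{RoLi}. The overall strategy is to interpret $E_\times(A)$ as counting collinear triples in $A \times A$ on lines through the origin, and then to upper-bound these triples by an $\F_q$-valid Rudnev-type point-plane incidence theorem. The hypothesis $|A\cap cG|\ll|G|^{1/2}$ for all proper subfields $G$ and $c\in \F_q$ is the precise non-concentration input that rules out the ``rich plane'' failure mode in $\F_q$ and makes the Rudnev bound applicable.

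Concretely, I would begin by dyadically decomposing $E_\times(A) = \sum_{s\in A/A} r(s)^2$, where $r(s)=|A\cap sA|$, to produce a scale $\Delta$ and a set $S \subseteq A/A$ with $r(s)\asymp\Delta$ on $S$, satisfying $|S|\Delta^2 \gg E_\times(A)/\log|A|$ and $|S|\Delta\leq|A|^2$. Each $s\in S$ determines a line through the origin of slope $s$ containing at least $\Delta$ points of $A\times A$, so these lines collectively witness at least $\asymp|S|\Delta^3$ ordered collinear triples in $A\times A$.

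The upper bound on collinear triples comes from rewriting the collinearity of three points of $A\times A$ as a determinantal relation in pairs of differences drawn from $A-A$, and interpreting the resulting equation as a point-plane incidence problem in $\F_q^3$ with $\asymp|A-A|^2|A|$ planes and $\asymp|A-A|^3$ points. Applying an $\F_q$-version of Rudnev's point-plane theorem, and invoking $|A\cap cG|\ll|G|^{1/2}$ to control the number of points per plane, yields
\[
|S|\Delta^3\lesssim \log|A|\cdot \max\{|A-A|^{7/4}|A|^{5/4},\ |A-A|^{6/5}|A|^{7/5}\Delta,\ |A-A|^{7/4}|A|^{7/4}q^{-1/4}\},
\]
the three terms corresponding to the three regimes of the $\F_q$-Rudnev inequality. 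Substituting $E_\times(A)\lesssim|S|\Delta^2\log|A|$ after dividing by $\Delta$, and absorbing the residual $\Delta$ in the middle term via $|S|\Delta\leq|A|^2$, produces \eqref{eqn:MEB}. The main obstacle is the incidence step itself: setting up the determinantal collinearity relation so that the $\F_q$-version of Rudnev's theorem applies cleanly, and tracking the $q^{-1/4}$ correction that distinguishes the $\F_q$ bound from its prime-field analogue. The remaining steps---pigeonholing, Cauchy--Schwarz, and the final algebra---are routine.
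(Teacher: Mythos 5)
Your proposal hinges on an incidence step that is not available: an ``$\F_q$-version of Rudnev's point--plane theorem'' in which the hypothesis $|A\cap cG|\ll|G|^{1/2}$ makes the bound applicable. Rudnev's theorem does hold over arbitrary fields, but its constraint is on the \emph{characteristic}: the point set must have size $O(p^2)$, and its secondary term is governed by collinear points, not by rich planes. In $\F_q=\F_{p^m}$ with $p$ bounded (say $q=2^m$) the theorem gives nothing, no matter how well $A$ avoids subfield dilates, so the subfield non-concentration hypothesis cannot substitute for the $O(p^2)$ condition. Indeed, the Szemer\'edi--Trotter-type incidence bounds that do exist in arbitrary $\F_q$ under subfield conditions (including those in the cited paper of Mohammadi) are themselves \emph{deduced from} energy/sum-ratio estimates of exactly the present kind, so your route is essentially circular. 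There is also a smaller bookkeeping issue: after dividing your triple-count inequality by $\Delta$, the first and third terms come out as $|A-A|^{7/4}|A|^{5/4}/\Delta$ and $|A-A|^{7/4}|A|^{7/4}q^{-1/4}/\Delta$, which only match \eqref{eqn:MEB} when $\Delta\ge|A|^{1/4}$; the complementary range needs a separate (easy) argument, but as written the algebra does not close.

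For comparison: the paper does not prove this lemma at all; it quotes it from \cite[Lemma~10]{Moh}, described as a basic extension of \cite[Theorem~1.4]{RoLi}. The proof there is not incidence-theoretic but additive-combinatorial, in the style of Katz--Shen covering/pivot arguments combined with Pl\"unnecke--Ruzsa inequalities, run on the popular dyadic level of the ratio-representation function; the subfield hypothesis is what allows one to find the required ``good'' pivots in $\F_q$. This is visible in the shape of \eqref{eqn:MEB}: the three terms $|A-A|^{7/4}|A|$, $|A-A|^{6/5}|A|^{8/5}$ and $|A-A|^{7/4}|A|^{3/2}q^{-1/4}$ are precisely the energy-form counterparts of the dichotomy $|A-A|^{7}|A/A|^{4}\gg|A|^{12}$ or $|A-A|^{6}|A/A|^{5}\gg|A|^{12}$, and of the large-set bound $|A-A|^{7}|A/A|^{4}\gg|A|^{10}q$, from Lemma~\ref{lem:sum-ratio}. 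Exponents like $6/5$ are characteristic of these covering arguments, not of point--plane incidence bounds, which would produce powers such as $3/2$ and $9/4$ and no dependence on $|A-A|$. So beyond the unavailable key tool, the proposed mechanism would not reproduce the stated exponents; the gap is genuine.
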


We recall two formulations of the Balog-Szemer\'{e}di-Gowers theorem.
Lemma~\ref{BSG}, below, is due to Bourgain and Garaev~\cite{BouGar}.
\begin{lemma}
\label{BSG}
Let $X,Y \subseteq \F_q$ and $G\subseteq X \times Y$. Then there exists $X^{'}\subseteq X$ with $|X^{'}| \gg |G|/|Y|$ such that
\[
|X^{'} - X^{'}| \ll \frac{|X \overset{G}{-} Y|^{4} |X|^4 |Y|^3}{|G|^5},
\]
where $X \overset{G}{-} Y = \{x - y: (x, y) \in G \}.$ 
\end{lemma}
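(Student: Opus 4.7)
The plan is to follow the classical Balog--Szemer\'{e}di--Gowers (BSG) proof, adapted to the bipartite framework in which $G$ plays the role of an edge set between $X$ and $Y$. Writing $S = |X \overset{G}{-} Y|$ and, for each $d \in X \overset{G}{-} Y$, setting $r(d) = |\{(x,y)\in G : x-y = d\}|$, we have $\sum_d r(d) = |G|$, so Cauchy--Schwarz yields
\[
E := \sum_d r(d)^2 \geq \frac{|G|^2}{S}.
\]
The quantity $E$ counts quadruples $((x_1,y_1),(x_2,y_2)) \in G\times G$ satisfying $x_1 - x_2 = y_1 - y_2$, and so provides a lower bound on a restricted additive energy of $X$ witnessed by paths through $Y$ along edges of $G$.

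Next, I would apply the dependent-random-choice / path-counting step at the heart of BSG. Writing $N(x) = \{y \in Y : (x,y) \in G\}$, a two-step sampling scheme (first pick $y \in Y$ with weight proportional to $|N^{-1}(y)|^{2}$, then pick $x_1, x_2$ uniformly in $N^{-1}(y)$), combined with standard averaging, produces a subset $X_0 \subseteq X$ of size $|X_0| \gg |G|/|Y|$ (the mean $Y$-degree) for which a positive proportion of pairs $(a,b) \in X_0 \times X_0$ admit $\gg E/|X|^{2}$ common neighbors in $Y$. Equivalently, for such popular pairs the difference $a-b$ is realized as $y' - y$ with $(a,y),(b,y')\in G$ in many ways, placing $a-b$ inside $D - D$ where $D = X \overset{G}{-} Y$.

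Finally, I would upgrade the "most pairs are popular" structure to a uniform doubling bound by invoking the Plünnecke--Ruzsa inequality together with the Ruzsa triangle/covering principle. Since each popular difference sits inside $D - D$ with $|D| = S$, a Plünnecke-style iteration on $X_0$ yields a further refinement $X' \subseteq X_0$, still of size $|X'| \gg |G|/|Y|$, on which
\[
|X' - X'| \ll \frac{S^{4} |X|^{4} |Y|^{3}}{|G|^{5}},
\]
as claimed.

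The main obstacle is the exponent bookkeeping. Each of the three steps contributes a specific power of the effective doubling constant $K \approx S\,|X|\,|Y|/|G|^{2}$: Cauchy--Schwarz supplies one factor of $K$, the Gowers' graph-sampling step a second, and the Plünnecke--Ruzsa iteration the remaining powers. Reproducing exactly the exponent pattern $(4,4,3,-5)$ on $(S,|X|,|Y|,|G|)$ --- rather than merely some polynomial in $K$ --- requires the sampling and Plünnecke steps to be orchestrated in tandem, without the wastage incurred by running them as separate black-box reductions.
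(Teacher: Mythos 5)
The paper does not prove this lemma itself; it quotes it from Bourgain--Garaev, and the known proof is the Gowers path-counting version of Balog--Szemer\'edi--Gowers. Your first two steps (the Cauchy--Schwarz bound $\sum_d r(d)^2 \geq |G|^2/|D|$ with $D = X \overset{G}{-} Y$, and a dependent-random-choice step producing $X_0$ with $|X_0| \gg |G|/|Y|$ in which a positive proportion of pairs are ``popular'') are consistent with that route. The gap is in your final step. Invoking Pl\"unnecke--Ruzsa there is not a workable plan as stated: Pl\"unnecke--Ruzsa needs as input a sumset bound of the form $|X_0 + B| \leq K|X_0|$ for some set $B$, and nothing of that shape is available at that point; moreover, Pl\"unnecke--Ruzsa cannot by itself perform the crucial upgrade from ``most pairs of $X_0$ have a popular difference'' to ``\emph{all} differences of a large subset are controlled'' --- that upgrade is exactly the hard content of BSG, and you leave it as a black box while conceding in your last paragraph that the exponent pattern $(4,4,3,-5)$ is not actually recovered. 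Running BSG-type reductions as separate black boxes provably loses factors here, so the claimed bound is not established.

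The missing idea is the length-four path argument. One refines $X$ (by choosing a random neighborhood in $Y$ and discarding the vertices lying in few ``good'' pairs) to a set $X' \subseteq X$ with $|X'| \gg |G|/|Y|$ such that \emph{every} pair $x_1, x_2 \in X'$ is joined by $\gg |G|^5/(|X|^4|Y|^3)$ paths $x_1 \to y_1 \to x \to y_2 \to x_2$ with all four edges in $G$. Each such path yields the representation
\[
x_1 - x_2 = (x_1 - y_1) - (x - y_1) + (x - y_2) - (x_2 - y_2),
\]
in which each bracket lies in $D = X \overset{G}{-} Y$; distinct paths give distinct quadruples in $D^4$. Double counting quadruples then gives
\[
|X' - X'| \cdot \frac{|G|^5}{|X|^4|Y|^3} \ll |D|^4,
\]
which is precisely the stated bound, with no Pl\"unnecke--Ruzsa step at all. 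Your write-up never produces a set on which a uniform (all-pairs) path or representation count holds, and without it the passage from your ``popular pairs'' structure to the bound on $|X'-X'|$ with these exponents does not go through.
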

The following variation is due to Schoen~\cite{Schoen}.
\begin{lemma}
\label{BSGSchoen}
Let $A\subset \F_q$. Suppose that $E_{+}(A) = \kappa |A|^3$. Then there exists $A^{'} \subseteq A$ such that $|A^{'}| \gg \kappa|A|$ and 
\[
|A^{'} - A^{'}| \ll \kappa^{-4}|A^{'}|.
\]
\end{lemma}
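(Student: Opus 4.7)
The plan is to follow Schoen's sharpening of the classical Balog--Szemer\'edi--Gowers graph argument, which refines the original iterative pigeonholing into a single-shot extraction that yields the exponent $4$ on $\kappa^{-1}$ in the doubling estimate.

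The first step is to extract the popular differences. For $d \in A - A$, write $r(d) = |\{(a, b) \in A^2 : a - b = d\}|$, so that $\sum_d r(d) = |A|^2$ and $\sum_d r(d)^2 = E_{+}(A) = \kappa |A|^3$. Declare $d$ \emph{popular} if $r(d) \geq \kappa |A|/4$, and let $D$ be the set of popular differences. The non-popular differences contribute at most $(\kappa|A|/4) \cdot |A|^2 = \kappa|A|^3/4$ to $E_{+}(A)$, so popular differences carry a positive fraction of the energy. Equivalently, in the symmetric graph $\Gamma$ on vertex set $A$ whose edge set is $\{(a, b) : a - b \in D\}$, the number of edges is $\gg \kappa |A|^2$, after a dyadic pigeonholing on the values of $r(d)$.

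The second step is to apply Schoen's graph-theoretic lemma to the dense graph $\Gamma$, producing a subset $A' \subseteq A$ with $|A'| \gg \kappa|A|$ enjoying a uniform common-neighbour property: for every $(a, b) \in A' \times A'$, there are $\gg \kappa^{c} |A|$ common $\Gamma$-neighbours $x$, that is, $x \in A$ with $a - x, b - x \in D$, for an appropriate absolute constant $c$. Each such $x$ yields $a - b = (a - x) - (b - x)$, a representation as a difference of popular differences, and since each popular difference itself has $\geq \kappa|A|/4$ representations in $A \times A$, every $d \in A' - A'$ arises from many $4$-tuples of elements of $A$ in prescribed additive position. Double-counting these $4$-tuples against the trivial upper bound $|A|^4$ and tracking the resulting powers of $\kappa$ forces $|A' - A'| \ll \kappa^{-4} |A'|$ after substituting $|A'| \asymp \kappa|A|$.

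The main obstacle will be the graph-extraction step: guaranteeing the uniform common-neighbour bound for \emph{every} pair in $A' \times A'$, rather than merely on average, while keeping $|A'| \gg \kappa|A|$. The classical iterative BSG refinement loses polynomial factors in $\kappa$ at each stage and produces weaker exponents, so the usual proof is unsuited to the target constant $4$. Schoen's contribution is to replace this iteration by a dependent random-choice procedure on $\Gamma$ in which the lower bound on $|A'|$ and the uniform path count emerge simultaneously from a single probabilistic extraction, and the careful bookkeeping of exponents through that extraction is what pins down the constant $4$ in the final doubling bound.
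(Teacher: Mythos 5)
The paper does not prove this lemma at all: it is quoted verbatim from Schoen's paper \cite{Schoen}, so there is no internal argument to compare your proposal against; citing \cite{Schoen} is what the author does. Judged on its own terms, your sketch follows the classical Balog--Szemer\'edi--Gowers scheme (popular differences, graph extraction, double counting), but the quantitative bookkeeping does not reach the stated exponent, and the step you delegate to a ``graph-theoretic lemma'' is the place where it breaks. Run your own count: for $d\in A'-A'$, each common neighbour $x$ of a fixed pair $(a,b)$ with $a-b=d$ produces a pair of popular differences $(a-x,b-x)$, hence at least $(\kappa|A|/4)^2$ quadruples in $A^4$ representing $d$, and distinct $x$ give disjoint contributions; comparing with the total $|A|^4$ gives $|A'-A'|\ll \kappa^{-(c+2)}|A|\ll \kappa^{-(c+3)}|A'|$ when every pair of $A'$ has $\gg\kappa^{c}|A|$ common neighbours and $|A'|\gg\kappa|A|$. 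To obtain $\kappa^{-4}|A'|$ you therefore need $c=1$, i.e.\ \emph{every} pair of $A'$ must have $\gg\kappa|A|$ common neighbours in the popular-difference graph. That graph has only about $\kappa|A|^2$ edges, so its typical codegree is of order $\kappa^{2}|A|$; in a pseudorandom graph of density $\kappa$ no pair whatsoever has $\kappa|A|$ common neighbours. Consequently no abstract extraction lemma --- dependent random choice included, which for ``all pairs'' statements loses factors polynomial in $|A|$ unless the codegree target is lowered to about (density)$^2|A|$ --- can supply the property you invoke, and with the realistic target $c=2$ your count only yields $\kappa^{-5}|A'|$ (equivalently $\kappa^{-4}|A|$, which is weaker than the lemma since $|A'|$ may be as small as $\kappa|A|$).

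So the gap is not a matter of ``careful bookkeeping'': the popular-difference/common-neighbour double count, treated at the level of an abstract dense graph, structurally cannot produce the exponent $4$, and Schoen's actual proof is not of this shape --- it exploits the additive structure of the popular-difference set itself rather than only the fact that it defines a dense graph. Two further small points: the dyadic pigeonholing on $r(d)$ that you mention would introduce a logarithmic loss absent from the statement, and your final substitution $|A'|\asymp\kappa|A|$ is only a lower bound on $|A'|$, which matters because the conclusion is normalised by $|A'|$ rather than $|A|$. If you do not wish to reproduce Schoen's argument in full, the correct move here is the one the paper makes: cite \cite{Schoen} for the lemma as stated.
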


We require the following estimate on bilinear exponential sums. See \cite[Lemma 7]{BouGlib} for a proof.
\begin{lemma}
\label{lem:ClassicalBilinearSum}
For sets $X, Y \subseteq \F_q$, we have
\[
\bigg|\sum_{x\in X}\sum_{y\in Y}\alpha_{x}\beta_{y}\psi(xy)\bigg|\leq \sqrt{qNM},
\]
where $(\alpha_x)_{x\in X}$ and $(\beta_y)_{y\in Y}$ are any complex valued weights satisfying
\[
\sum_{x\in X} |\alpha_x|^{2} = N,\qquad \sum_{y\in Y} |\beta_y|^{2} = M.
\]
\end{lemma}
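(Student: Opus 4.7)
The plan is the standard Cauchy–Schwarz plus orthogonality argument for bilinear character sums. First I would apply the Cauchy–Schwarz inequality in the variable $x$, extending the outer summation from $X$ to all of $\F_q$ to enable use of orthogonality. This yields
\[
\bigg|\sum_{x\in X}\sum_{y\in Y}\alpha_{x}\beta_{y}\psi(xy)\bigg|^{2}
\;\leq\; \Big(\sum_{x\in X}|\alpha_x|^{2}\Big)\sum_{x\in \F_q}\bigg|\sum_{y\in Y}\beta_{y}\psi(xy)\bigg|^{2}
\;=\; N \sum_{x\in \F_q}\bigg|\sum_{y\in Y}\beta_{y}\psi(xy)\bigg|^{2}.
\]

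Next I would expand the modulus squared into a double sum over $y_1,y_2\in Y$ and swap the order of summation, so that the sum over $x\in\F_q$ is placed innermost. Using the orthogonality relation
\[
\sum_{x\in\F_q}\psi(x(y_1-y_2)) \;=\; \begin{cases} q & \text{if } y_1=y_2,\\ 0 & \text{otherwise,}\end{cases}
\]
(which follows from the fact that $\psi$ is a nontrivial additive character of $\F_q$), only the diagonal $y_1=y_2$ survives, and the right-hand side collapses to
\[
N\sum_{y_1,y_2\in Y}\beta_{y_1}\overline{\beta_{y_2}}\sum_{x\in\F_q}\psi(x(y_1-y_2))
\;=\; Nq\sum_{y\in Y}|\beta_y|^{2} \;=\; NqM.
\]
Taking square roots gives the claimed bound $\sqrt{qNM}$.

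There is no serious obstacle here: the only care needed is to ensure that after Cauchy–Schwarz one sums over all of $\F_q$ (not merely over $X$), which is legitimate because the summand is nonnegative, and this is precisely what makes the orthogonality relation applicable.
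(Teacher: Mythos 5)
Your proof is correct and is the standard Cauchy--Schwarz plus orthogonality argument; the paper itself gives no proof but refers to Bourgain--Glibichuk, where essentially this same argument is used. The one point requiring care---extending the outer sum to all of $\F_q$ before applying orthogonality of the nontrivial character $\psi$---is handled correctly.
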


In the next lemma we recall two inequalities which are derived using Parseval's identity and different applications of H\"older's inequality. Also see \cite{BouGar, KonShp, RNRuShk}.
\begin{lemma}
\label{lem:BilinearHolder}
For sets $X, Y \subset \F_q$, we have the inequalities
\begin{equation}
\label{eqn:BH1}
\bigg|\sum_{x\in X}\sum_{y\in Y}\psi(xy)\bigg|^{4} \leq q\cdot \min\big\{|X|^{3}E_{+}(Y), |Y|^{3}E_{+}(X)\big\}
\end{equation}
and
\begin{equation}
\label{eqn:BH2}
\bigg|\sum_{x\in X}\sum_{y\in Y}\psi(xy)\bigg|^{8} \leq q|X|^{4}|Y|^{4}E_{+}(X)E_{+}(Y).
\end{equation}
\end{lemma}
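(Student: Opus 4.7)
Write $S = \sum_{x \in X}\sum_{y \in Y} \psi(xy)$ and introduce the Fourier coefficient $\widehat{1_Y}(\xi) := \sum_{y \in Y} \psi(\xi y)$, so that $S = \sum_{x \in X} \widehat{1_Y}(x)$. The only analytic inputs needed are the two Parseval-type identities
\[
\sum_{\xi \in \F_q} |\widehat{1_Y}(\xi)|^2 = q|Y|, \qquad \sum_{\xi \in \F_q} |\widehat{1_Y}(\xi)|^4 = qE_{+}(Y),
\]
both immediate from the orthogonality relation $\sum_{\xi \in \F_q} \psi(\xi z) = q \cdot \mathbf{1}_{z=0}$ (the second identity arises when one expands $|\widehat{1_Y}(\xi)|^4$ and counts quadruples $(y_1,y_2,y_3,y_4) \in Y^4$ with $y_1+y_3 = y_2+y_4$); the analogous identities hold with $X$ in place of $Y$.

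For~\eqref{eqn:BH1}, my plan is two applications of Cauchy-Schwarz. A first Cauchy-Schwarz over the outer sum yields $|S|^2 \leq |X| \sum_{x \in X} |\widehat{1_Y}(x)|^2$. A second Cauchy-Schwarz, combined with the harmless extension of the summation range from $X$ to $\F_q$, gives $\sum_{x \in X} |\widehat{1_Y}(x)|^2 \leq |X|^{1/2} (qE_{+}(Y))^{1/2}$. Chaining these two bounds and raising to the fourth power produces $|S|^4 \leq q|X|^3 E_{+}(Y)$; swapping the roles of $X$ and $Y$ throughout delivers the companion $|S|^4 \leq q|Y|^3 E_{+}(X)$, and together these two yield~\eqref{eqn:BH1}.

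For~\eqref{eqn:BH2}, the strategy is to postpone the passage to a sum over $\F_q$ so as to squeeze out one extra Cauchy-Schwarz. Starting again from $|S|^2 \leq |X| \sum_{x \in X} |\widehat{1_Y}(x)|^2$, I swap summation order to rewrite
\[
\sum_{x \in X} |\widehat{1_Y}(x)|^2 = \sum_{y_1, y_2 \in Y} \widehat{1_X}(y_1 - y_2) = \sum_{d \in \F_q} r_{Y-Y}(d)\,\widehat{1_X}(d),
\]
where $r_{Y-Y}(d) = |\{(y_1, y_2) \in Y^2 : y_1 - y_2 = d\}|$ satisfies $\sum_d r_{Y-Y}(d)^2 = E_{+}(Y)$. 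Squaring and applying Cauchy-Schwarz over the $|Y|^2$-term sum gives the upper bound $|Y|^2 \sum_d r_{Y-Y}(d) |\widehat{1_X}(d)|^2$. A final Cauchy-Schwarz, pairing $r_{Y-Y}^{2}$ against $|\widehat{1_X}|^4$ and invoking the Parseval identities for $X$, estimates this last sum by $q^{1/2} (E_{+}(X)E_{+}(Y))^{1/2}$. Collecting the factors and raising to the eighth power yields $|S|^8 \leq q|X|^4|Y|^4 E_{+}(X)E_{+}(Y)$.

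There is no real obstacle here: the lemma is a textbook Fourier-analytic exercise. The only tactical point is to distribute the Cauchy-Schwarz factors at each step so that each invocation lands on one of the two Parseval identities. The essential distinction between the two arguments is that for~\eqref{eqn:BH2} one must delay the sum over $\F_q$ until after the $Y-Y$ representation function has been extracted, ensuring that the fourth-power Parseval identity can be exploited with both $X$ and $Y$.
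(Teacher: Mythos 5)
Your proof is correct, and it follows exactly the route the paper indicates (Parseval's identity combined with repeated Cauchy--Schwarz/H\"older applications, as in the cited references); the paper itself gives no further details. Both chains of inequalities check out, including the key identities $\sum_{\xi}|\widehat{1_Y}(\xi)|^4=qE_{+}(Y)$ and $\sum_{d}r_{Y-Y}(d)^2=E_{+}(Y)$, so nothing needs to be added.
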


We record a simple consequence of Lemma~\ref{lem:BilinearHolder}, which also appears in~\cite[equation (3.7)]{KonShp}.
\begin{lemma}
\label{lem:gausssumenergy}
Let $H$ be a multiplicative subgroup of $\F_q^*$ then 
\[
\max_{a \in \F_q^{*}}|S(a, H)| \leq \min\bigg\{\bigg(\frac{q E_{+}(H)}{|H|}\bigg)^{1/4}, q^{1/8}E_{+}(H)^{1/4}\bigg\}.
\]
\end{lemma}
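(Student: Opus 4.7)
The plan is to exploit the multiplicative invariance of $H$ in order to realize $S(a,H)$ as a bilinear sum over $H\times H$, and then invoke Lemma~\ref{lem:BilinearHolder}.

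First I would observe that for any fixed $h_{0}\in H$, the map $h\mapsto h_{0}h$ is a bijection of $H$ onto itself, hence $S(a,H)=\sum_{h\in H}\psi(a h_{0} h)$ for every choice of $h_{0}\in H$. Averaging this identity over $h_{0}\in H$ turns the single sum into a double sum:
\[
|H|\cdot S(a,H)\;=\;\sum_{h_{0}\in H}\sum_{h\in H}\psi\!\bigl((ah_{0})h\bigr)\;=\;\sum_{x\in aH}\sum_{y\in H}\psi(xy).
\]
Here the dilate $aH$ has the same cardinality as $H$, and, crucially, $E_{+}(aH)=E_{+}(H)$ since additive energy is invariant under scaling by a nonzero element of $\F_{q}$.

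Next I would apply the two inequalities of Lemma~\ref{lem:BilinearHolder} with $X=aH$ and $Y=H$. The estimate \eqref{eqn:BH1} gives $|H|^{4}|S(a,H)|^{4}\leq q|H|^{3}E_{+}(H)$, which upon dividing by $|H|^{4}$ yields $|S(a,H)|\leq \bigl(qE_{+}(H)/|H|\bigr)^{1/4}$. The estimate \eqref{eqn:BH2} similarly gives $|H|^{8}|S(a,H)|^{8}\leq q|H|^{8}E_{+}(H)^{2}$, so $|S(a,H)|\leq q^{1/8}E_{+}(H)^{1/4}$. Taking the minimum of the two bounds furnishes the lemma.

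There is no serious obstacle here: the only genuine idea is spotting that $S(a,H)$ can be inflated into a double sum by averaging over the group, thereby exposing a bilinear structure that the Parseval-based inequalities of Lemma~\ref{lem:BilinearHolder} can exploit. Once that inflation is in place, the rest is a direct substitution, aided only by the elementary fact that additive energy is dilation invariant.
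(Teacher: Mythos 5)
Your proof is correct and is essentially the argument the paper intends: the lemma is stated as a direct consequence of Lemma~\ref{lem:BilinearHolder}, obtained by using the $H$-invariance of $S(a,H)$ to write $|H|\,S(a,H)=\sum_{x\in aH}\sum_{y\in H}\psi(xy)$ (the same coset-inflation the paper uses again in the proof of Theorem~\ref{thm:GaussSumViaTrilinearSum}) and then applying \eqref{eqn:BH1} and \eqref{eqn:BH2} together with the dilation invariance $E_{+}(aH)=E_{+}(H)$.
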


Next, we state a standard dyadic pigeonholing argument.
\begin{lemma}
\label{lem:dyadic}
Let $X \subseteq \F_q$ and let $f$ be a function satisfying $0 \leq f(x) \leq M$ for all $x \in X$. Suppose that 
\[
\sum_{x\in X} f(x) \geq K.
\]
Then there exists a subset $X^{'} \subseteq X$ and a number $N\geq 1$ such that $N < f(x) \leq 2N$ for all $x \in X^{'}$ and 
\[
\sum_{x\in X^{'}} f(x) \geq \frac{K}{\log_2 M}.
\]
Consequently we have $N|X^{'}| \geq K/(2\log_2 M)$.
\end{lemma}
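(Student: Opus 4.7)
The plan is a standard dyadic pigeonhole decomposition of $X$ based on the level sets of $f$. Since $f$ takes values in $[0,M]$, I would partition the positive part of its range dyadically: set $J = \lceil \log_2 M \rceil$ and, for each integer $j$ with $0 \le j \le J$, define
\[
X_j = \{x \in X : 2^{j-1} < f(x) \le 2^j\}.
\]
Together with $\{x \in X : f(x) = 0\}$ (which contributes nothing to the hypothesized sum), these $J+1$ sets partition $X$.

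Next I would apply the pigeonhole principle to the decomposition
\[
K \;\le\; \sum_{x \in X} f(x) \;=\; \sum_{j=0}^{J} \sum_{x \in X_j} f(x).
\]
Since there are at most $\log_2 M + 1$ nonempty terms on the right, at least one index $j_0$ satisfies
$\sum_{x \in X_{j_0}} f(x) \ge K/\log_2 M$ (absorbing the harmless $+1$). I then take $X' = X_{j_0}$ and $N = \max\{1, 2^{j_0-1}\}$. In the generic case $j_0 \ge 1$, one has $N < f(x) \le 2N$ on $X'$ as required; the boundary case $j_0 = 0$ causes no harm, since there $f(x) \le 1 \le 2N$ while $N = 1$ still meets the requirement $N \ge 1$, and the final bound remains intact because $\sum_{x \in X'} f(x) \le |X'|$.

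Finally, the ``consequently'' inequality follows at once: since $f(x) \le 2N$ on $X'$,
\[
2N\,|X'| \;\ge\; \sum_{x \in X'} f(x) \;\ge\; \frac{K}{\log_2 M},
\]
which yields $N|X'| \ge K/(2\log_2 M)$. There is essentially no obstacle in this lemma; it is a textbook pigeonholing, and the only minor care is the enforcement of $N \ge 1$, which is why the dyadic scale is truncated at the level $N=1$ rather than continued to arbitrarily small values of $f$.
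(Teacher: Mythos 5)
The paper itself offers no proof of this lemma --- it is recorded as ``a standard dyadic pigeonholing argument'' --- so the only question is whether your argument is sound, and in substance it is exactly the intended one: decompose $X$ into dyadic level sets of $f$ and pigeonhole. Three boundary points in your write-up, however, do not quite deliver the statement as written. First, the sets $X_j=\{x\in X: 2^{j-1}<f(x)\le 2^j\}$ for $0\le j\le J$, together with $\{x: f(x)=0\}$, do \emph{not} partition $X$: every $x$ with $0<f(x)\le 1/2$ is omitted, so the identity $\sum_{x\in X}f(x)=\sum_{j}\sum_{x\in X_j}f(x)$ you pigeonhole on needs those points either covered by further buckets (which changes the bucket count) or shown to carry negligible mass. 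Second, pigeonholing over $\lceil\log_2 M\rceil+1$ classes yields the lower bound $K/(\lceil\log_2 M\rceil+1)$, and ``absorbing the harmless $+1$'' is an assertion, not a derivation of the stated constant $K/\log_2 M$. Third, in your case $j_0=0$ you set $N=1$, but then the required inequality $N<f(x)$ fails on $X'$ (there $f(x)\le 1$), so the sandwich property in the conclusion is simply not verified in that case; you only rescue the final inequality $N|X'|\ge K/(2\log_2 M)$.

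To be fair, these defects mirror a looseness in the lemma statement itself: as literally stated it cannot hold when essentially all the mass of $f$ lies in $(0,1]$ (no $N\ge 1$ can satisfy $N<f(x)$ there), and in the paper's applications $f$ is a genuinely real-valued quantity that may well be below $1$ on many points. What the paper actually uses is only the order-of-magnitude consequence (a set $X'$ and scale $N$ with $N<f\le 2N$ on $X'$ and $N|X'|\gg K/\log q$, constants immaterial), and for that purpose your argument is adequate once the dyadic buckets are extended downward far enough to cover all values that can carry a constant proportion of the mass, with the denominator relaxed from $\log_2 M$ to $O(\log q)$. If you want the lemma verbatim, you must treat the points with small $f(x)$ explicitly --- e.g.\ note that values $f(x)\le K/(2|X|)$ contribute at most $K/2$ in total and bucket only the remaining range --- rather than dispose of them in passing.
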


\section{Proof of Theorem \ref{thm:AddEnergy}}
Let $A\subset \F_q$ with $|A| = q^{\beta}$. First, assume $A$ satisfies restriction~\eqref{eqn:energycnd1} for some $\lambda > 0$ to be specified. Suppose, for a contradiction, that $E_{+}(A) = |A|^{3-\delta_{*}}$ for some $\delta_{*} < \min\{\delta, (\beta^{-1} -1)/33\}$. By Lemma~\ref{BSGSchoen}, with $\kappa = |A|^{-\delta_{*}}$, there exists a subset $A^{'} \subseteq A$, with
\begin{equation}
\label{eqn:Hprimesize}
|A^{'}| \gg |A|^{1-\delta_{*}},
\end{equation}
such that 
\[
|A^{'} - A^{'}| \ll |A|^{4\delta_{*}}|A^{'}| \ll |A^{'}|^{1 + \frac{4\delta_{*}}{1-\delta_{*}}}.
\]
Since $|A/A| \approx |A|$, we also have
\[
 |A^{'}/A^{'}| \ll |A| \ll |A^{'}|^{\frac{1}{1-\delta_{*}}}.
\]
We choose $\lambda$ to be sufficiently small so that, in view of~\eqref{eqn:Hprimesize}, we have
\[
\lambda |A|^{1-\delta_{*}} < \frac{|A^{'}|}{8}.
\]
It follows that for all elements $c$ and proper subfields $G$ in $\F_q$ 
\[
|A^{'} \cap cG| \leq |A \cap cG| \leq \lambda |A|^{1-\delta} < \lambda |A|^{1-\delta_{*}} < \frac{|A^{'}|}{8}.
\]
Hence we may apply Lemma~\ref{lem:sum-ratio} to the set $A^{'}$. If $|A^{'}| \ll q^{1/2}$, the lower bound for $\delta_{*}$ is determined by the equation
\[
7\bigg(1 + \frac{4\delta_{*}}{1-\delta_{*}}\bigg) + 4\bigg(\frac{1}{1-\delta_{*}}\bigg) \geq 12,
\]
which gives $\delta_{*} \geq 1/33$. If $|A^{'}| = q^{\beta_*}$ with $\beta_* > 1/2$, we have
\[
7\bigg(1 + \frac{4\delta_{*}}{1-\delta_{*}}\bigg) + 4\bigg(\frac{1}{1-\delta_{*}}\bigg) \geq 10 + \beta^{-1}_{*},
\]
which implies
\[
\delta_{*} \geq \frac{\beta^{-1}_{*} - 1}{31 + \beta^{-1}_{*}} > \frac{\beta^{-1} - 1}{33}.
\]
Thus, in either case, we have a contradiction on our choice of $\delta_{*}$. This concludes the proof of estimate \eqref{eqn:EnergyBound} under restriction~\eqref{eqn:energycnd1}.

Next, under restriction~\eqref{eqn:energycnd2}, for all proper subfields $G$ and elements $c$ in $\F_q$, we have
\[
|A^{'} \cap cG| \leq |A \cap cG| \ll |G|^{1/2},
\]
so that Lemma~\ref{lem:sum-ratio} may be applied to the set $A^{'}$. Hence, one may simply repeat the argument above, with $\delta = 1/33$.

\section{Proof of Theorem \ref{thm:GaussSumViaEnergy}}
We shall find useful the following observation.

\begin{claim}
\label{claim:EquivalencyofConditions}
Let $H$ denote a multiplicative subgroup of $\F^*_q$ and let $\lambda$ be the absolute constant given by Theorem~\ref{thm:AddEnergy}. Fix $\delta \leq 1/33$ and suppose that for all proper subfields $G$ of $\F_q$, we have
\begin{equation}
\label{eqn:Subgroupcnd1}
 |H \cap G| \leq \lambda |H|^{1 - \delta} .
\end{equation}
Then $H$ also satisfies condition~\eqref{eqn:energycnd1}. If $H$ satisfies
\begin{equation}
\label{eqn:Subgroupcnd2}
 |H \cap G| \ll |G|^{1/2},
\end{equation}
then it also satisfies condition~\eqref{eqn:energycnd2}.
\end{claim}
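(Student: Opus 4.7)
The plan is to exploit the multiplicative structure of $H$ to reduce intersections with an arbitrary multiplicative coset $cG$ of a proper subfield to intersections with the subfield $G$ itself, after which the two parts of the claim follow immediately from the hypotheses.

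More concretely, I would first dispense with degenerate cases: if $c = 0$, then $cG = \{0\}$ and $H \cap cG = \emptyset$ since $H \subset \F_q^*$, so there is nothing to prove. Similarly, if $H \cap cG$ is empty, both \eqref{eqn:energycnd1} and \eqref{eqn:energycnd2} hold trivially. So assume $c \in \F_q^*$ and fix some $h_0 \in H \cap cG$, writing $h_0 = c g_0$ with $g_0 \in G^*$.

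The key observation is then that the map $\varphi \colon H \cap cG \to H \cap G^*$ defined by $\varphi(h) = h / h_0$ is a well-defined injection. Indeed, for $h \in H \cap cG$ we can write $h = c g$ with $g \in G$, and then
\[
\varphi(h) = \frac{h}{h_0} = \frac{c g}{c g_0} = \frac{g}{g_0} \in G^*,
\]
since $G$ is a subfield and $g_0 \neq 0$; on the other hand $h/h_0 \in H$ because $H$ is a multiplicative subgroup. Injectivity is immediate because $\varphi$ is multiplication by the fixed nonzero constant $h_0^{-1}$. Consequently,
\[
|H \cap cG| \leq |H \cap G^*| \leq |H \cap G|.
\]

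With this inequality in hand, both conclusions are automatic. Under hypothesis \eqref{eqn:Subgroupcnd1} we get $|H \cap cG| \leq |H \cap G| \leq \lambda |H|^{1-\delta}$, which is exactly \eqref{eqn:energycnd1} applied to $A = H$. Under hypothesis \eqref{eqn:Subgroupcnd2} we get $|H \cap cG| \leq |H \cap G| \ll |G|^{1/2}$, which is \eqref{eqn:energycnd2}. There is no real obstacle here; the only subtlety is recognizing that the seemingly stronger conditions \eqref{eqn:energycnd1} and \eqref{eqn:energycnd2}, which range over all multiplicative translates $cG$, automatically follow from the untranslated versions \eqref{eqn:Subgroupcnd1} and \eqref{eqn:Subgroupcnd2} when $A = H$ is itself a multiplicative group, since a translate of a multiplicative coset by an element of that coset lies back in $G^*$.
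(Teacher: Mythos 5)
Your proof is correct and is essentially the paper's argument: the paper also fixes an element $d \in H \cap cG$ (handling the empty case trivially) and observes $H \cap cG = d(H \cap G)$, which is your division-by-$h_0$ map phrased as an equality rather than an injection. No meaningful difference in approach.
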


\begin{proof}
 Given an arbitrary proper subfield $G$ and an element $c$ in $\F_q$, suppose that the intersection $H \cap cG$ contains some element $d$, or else the desired conditions are trivially satisfied. Then, noting that $H = dH$ and $cG= dG $, we have $H \cap cG = d(H \cap G).$ Hence if conditions~\eqref{eqn:energycnd1} and~\eqref{eqn:energycnd2} fail, then so do conditions~\eqref{eqn:Subgroupcnd1} and~\eqref{eqn:Subgroupcnd2} respectively.
\end{proof}
\begin{proof}[Proof of Theorem \ref{thm:GaussSumViaEnergy}]
If a multiplicative group $H$ satisfies condition~\eqref{eqn:Subgroupcnd1} for any $\delta \leq 1/33$, by Claim~\ref{claim:EquivalencyofConditions}, we can apply Theorem~\ref{thm:AddEnergy}, so that by Lemma~\ref{lem:gausssumenergy}, it follows
\begin{equation}
\label{eqn:ExpSumBound1}
\max_{a\in\F_q^{*}}|S(a, H)| < q^{1/4} \cdot \max \bigg\{|H|^{(2-\delta)/4}, \frac{|H|^{67/132}}{q^{1/132}} \bigg\}
\end{equation}
and
\begin{equation}
\label{eqn:ExpSumBound2}
\max_{a\in\F_q^{*}}|S(a, H)| < q^{1/8} \cdot \max \bigg\{|H|^{(3-\delta)/4}, \frac{|H|^{25/33}}{q^{1/132}} \bigg\}. 
\end{equation}
Similarly, if $H$ satisfies condition~\eqref{eqn:Subgroupcnd2}, then the estimates~\eqref{eqn:ExpSumBound1} and~\eqref{eqn:ExpSumBound2} hold with $\delta = 1/33$.

Now, for $n\geq 1$, we fix $H$ to be the group of $n$th powers such that $|H| = (q -1)/n$. By~\eqref{eqn:subfieldintersection}, it is straightforward to confirm that if $n$ satisfies restrictions~\eqref{eqn:gausssumcnd1} or~\eqref{eqn:gausssumcnd2}, then $H$ satisfies restrictions~\eqref{eqn:Subgroupcnd1} or~\eqref{eqn:Subgroupcnd2} respectively. Finally, by~\eqref{eqn:EStoGS}, for any $a\in \F_q^*$, it follows
\begin{equation}
\label{gausssumidentity2}
|S_n(a)| \ll n|S(a, H)|,
\end{equation}
so that the estimates~\eqref{eqn:GSBound1} and~\eqref{eqn:GSBound2} follow from~\eqref{eqn:ExpSumBound1} and~\eqref{eqn:ExpSumBound2} respectively.
\end{proof}

\section{Proof of Theorem \ref{thm:IncSums}}
For $K\leq t= ord(g)$, let $X = \{g^i\}_{i=1}^{K}$ and $H = \langle g \rangle$, the subgroup of $\F_q^*$ generated by $g$. Clearly $X\subseteq H$, $|X/X| \approx |X|$ and $|H| = t$. Given a proper subfield $G$ of $\F_q$, with $|G| = p^{\nu}$, note that $|H\cap G| = \gcd(t, p^{\nu} -1)$. By Claim~\ref{claim:EquivalencyofConditions}, it follows that, if $t$ satisfies condition \eqref{eqn:IncSumCon}, then $X$ satisfies condition \eqref{eqn:energycnd2}. Hence, by Theorem~\ref{thm:AddEnergy}, we have a bound on $E_{+}(X)$. Then, observing the identity 
\begin{equation*}
\sum_{a \in \F_q}\Big|\sum_{x\in X}\psi_a(x)\Big|^{4} = qE_{+}(X),
\end{equation*}
we get estimate \eqref{eqn:IncPars}.

To prove \eqref{eqn:IncSum1} and \eqref{eqn:IncSum2}, we use a technique from \cite[Corollary~19]{RNRuShk}. Let
\[
\sigma(K) = \max_{1\leq J \leq K} \max_{a\in \F_q^*}\bigg|\sum_{j\leq J}\psi_a(g^j)\bigg|,
\]
and note that for any integer $J$, we have
\begin{equation}
\label{eqn:BilinearDifference}
\bigg|\sum_{k=1}^{K}\psi_a(g^{k}) - \frac{1}{J}\sum_{j=1}^{J} \sum_{k=1}^{K}\psi_a(g^{j+k}) \bigg|\leq 2\sigma(J).
\end{equation}
Now, let $Y = \{g^i\}_{i=1}^{J}$ for some arbitrary $J \leq K$. Then, since $Y\subseteq H$ and $|Y/Y| \approx |Y|$, we can use Theorem~\ref{thm:AddEnergy} to bound $E_{+}(Y)$. In particular, fixing $J = \left[K/4\right]$, by Theorem~\ref{thm:AddEnergy} and \eqref{eqn:BH1}, we have
\[
\frac{1}{J}\bigg|\sum_{j=1}^{J} \sum_{k=1}^{K}\psi_a(g^{j+k}) \bigg| \ll q^{1/4}K^{65/132} + q^{8/33}K^{67/132}.
\]
Consequently, by \eqref{eqn:BilinearDifference}, we have
\[
\sigma(K) \leq 2\sigma(K/4)+ O \Big(q^{1/4}K^{65/132} + q^{8/33}K^{67/132}\Big).
\]
This gives \eqref{eqn:IncSum1} by induction. The same argument can be repeated to prove \eqref{eqn:IncSum2} using \eqref{eqn:BH2}.

\section{Proof of Theorem \ref{thm:TrilinearSumsViaEnergy}}
Let 
\begin{equation*}
\bigg|\sum_{x\in X}\sum_{y\in Y} \sum_{z \in Z}\alpha_{x}\beta_{y}\gamma_{z}\psi(xyz)\bigg| = |X||Y||Z|\Delta.
\end{equation*}
We proceed to establish an upper bound on $\Delta$. An application of the triangle inequality gives 
\[
\sum_{x\in X}\bigg|\sum_{y\in Y} \sum_{z \in Z}\beta_{y}\gamma_{z}\psi(xyz)\bigg| \geq |X||Y||Z|\Delta.
\]
Then, by Lemma~\ref{lem:dyadic}, there exists a subset $A\subset X$ and a number $\delta_1 > 0$ with
\begin{equation}
\label{eqn:AandDelta1Size}
|A| \gg \frac{|X|\Delta}{\delta_{1}\log q}, \qquad \frac{\Delta}{\log q} \ll \delta_{1} \leq 1
\end{equation}
such that 
\begin{equation}
\label{eqn:FirstDyadic}
\delta_1 |Y||Z| \leq \bigg|\sum_{y\in Y}\sum_{z\in Z}\beta_{y}\gamma_{z}\psi(xyz)\bigg| \leq 2\delta_{1}|Y||Z| \qquad \textnormal{for any}\: x \in A.
\end{equation}
Now for some complex numbers $\theta_{x}$ with absolute value $1$ we have
\[
\sum_{x\in A}\sum_{y\in Y}\sum_{z\in Z}\theta_{x}\beta_{y}\gamma_{z}\psi(xyz) \geq \delta_{1}|A||Y||Z|.
\]
By the Cauchy-Schwarz inequality we obtain
\[
\sum_{a_1\in A}\sum_{a_2\in A}\bigg|\sum_{y\in Y}\sum_{z\in Z} \psi((a_1 -a_2)yz))\bigg|\geq \delta_{1}^{2}|A|^{2}|Y||Z|.
\]
By another application of Lemma~\ref{lem:dyadic}, there exists a set $G\subset A\times A$ and a number $\delta_2 >0$ with
\begin{equation}
\label{eqn:delta2}
|G|\gg\frac{|A|^{2}\delta_{1}^{2}}{\delta_{2}\log q}, \qquad \frac{\delta_{1}^{2}}{\log q} \ll \delta_{2} \leq 1
\end{equation}
such that for all pairs $(a_1, a_2) \in G$ we have
\[
\delta_{2} |Y||Z| \leq \bigg|\sum_{y\in Y}\sum_{z\in Z}\psi((a_1-a_2)yz)\bigg| \leq 2\delta_{2}|Y||Z|.
\]
Thus
\[
\delta_{2}^{2}|Y|^{2}|Z|^{2}|A\overset{G}{-}A| \leq \sum_{n\in \F_q}\bigg|\sum_{y\in Y}\sum_{z\in Z}\psi(nyz)\bigg|^{2} \leq q|Y||Z|^{2},
\]
which in turn implies that
\[
|A\overset{G}{-}A| \leq \frac{q}{\delta_{2}^{2}|Y|}.
\]
Using Lemma~\ref{BSG}, it follows that there exists a subset $A^{'} \subset A$, with
\begin{equation}
\label{eqn:TrilinearBSGSizeBound}
|A^{'}| \gg \frac{\delta_{1}^{2}|A|}{\delta_{2}\log q},
\end{equation}
such that
\begin{align}
\label{eqn:TrilinearBSGDifferenceBound}
|A^{'}- A^{'}| \ll \frac{q^{4}(\log q)^{5}}{|A|^{3}|Y|^{4}\delta_{1}^{10}\delta_{2}^{3}}.
\end{align}
For $\xi \in \F_q$, we define
\[
I(\xi) = |\{(x, y) \in A^{'} \times Y: xy = \xi\}|.
\]
Then, recalling \eqref{eqn:MECS}, we have
\begin{equation}
\label{eqn:A'YEnergy}
\sum_{\xi\in\F_{q}}I(\xi)^{2} = E_{\times}(A^{'}, Y) \leq |Y|^{3/2}E_{\times}(A^{'})^{1/2}.
\end{equation}
In particular, we used the trivial bound $E_{\times}(Y) \leq |Y|^{3}$. By~\eqref{eqn:FirstDyadic}, for some complex numbers $\theta_x$ with absolute value 1, we have
\[
\sum_{x\in A^{'}}\sum_{y\in Y}\sum_{z\in Z}\theta_{x}\beta_{y}\gamma_{z}\psi(xyz) \geq \delta_{1}|A^{'}||Y||Z|.
\]
We set
\[
\tilde{I}(\xi) = \sum_{\substack{(x, y) \in A^{'} \times Y \\ xy =\xi}} \theta_{x}\beta_{y},
\]
such that $|\tilde{I}(\xi)| \leq I(\xi)$. Then, by Lemma~\ref{lem:ClassicalBilinearSum} and~\eqref{eqn:A'YEnergy}, it follows
\begin{align*}
\delta_1 |A^{'}||Y||Z| &\leq \bigg|\sum_{\xi\in\F_{q}}\sum_{z\in Z}\tilde{I}(\xi)\gamma_{z}\psi(\xi z)\bigg| \\
& \leq q^{1/2} \bigg(\sum_{\xi\in\F_{q}}|\tilde{I}(\xi)|^{2}\bigg)^{1/2}|Z|^{1/2} \\
&\leq q^{1/2}|Z|^{1/2}|Y|^{3/4}E_{\times}(A^{'})^{1/4}.
\end{align*}
Under restriction~\eqref{eqn:trilinearantifield}, we may apply Lemma~\ref{lem:energybound} to the set $A^{'}$ to bound $E_{\times}(A^{'})$. Then, based on the first term of \eqref{eqn:MEB}, we have
\[
\delta_{1}^{80}|A^{'}|^{80}|Y|^{20}|Z|^{40}q^{-40} \ll (\log|A^{'}|)^{20}|A^{'}-A^{'}|^{35}|A^{'}|^{20}.
\]
By~\eqref{eqn:TrilinearBSGSizeBound} and~\eqref{eqn:TrilinearBSGDifferenceBound} we get
\[
\delta_{1}^{550}\delta_{2}^{45}|A|^{165}|Y|^{160}|Z|^{40}\ll q^{180}(\log q)^{O(1)}.
\]
We eliminate $\delta_2$ by~\eqref{eqn:delta2}, then use~\eqref{eqn:AandDelta1Size} to replace $|A|$ and $\delta_1$, to get
\begin{equation}
\label{eqn:T1stTerm}
|X|^{33}|Y|^{32}|Z|^{8}\Delta^{128} \ll q^{36+o(1)}.
\end{equation}
Next, based on the second term of \eqref{eqn:MEB}, we have
\[
\delta_{1}^{80}|A^{'}|^{80}|Y|^{20}|Z|^{40}q^{-40} \ll (\log|A^{'}|)^{20}|A^{'}- A^{'}|^{24}|A^{'}|^{32}.
\]
We use~\eqref{eqn:TrilinearBSGSizeBound} and~\eqref{eqn:TrilinearBSGDifferenceBound}, to get
\[
\delta_{1}^{416}\delta_{2}^{24}|A|^{120}|Y|^{116}|Z|^{40}\ll q^{136}(\log q)^{O(1)}.
\]
We replace $\delta_2$ using \eqref{eqn:delta2}, and then replace $|A|$ and $\delta_1$ by~\eqref{eqn:AandDelta1Size}. This gives
\begin{equation}
\label{eqn:T2ndTerm}
|X|^{30}|Y|^{29}|Z|^{10}\Delta^{116} \ll q^{34+o(1)}.
\end{equation}
Following the same process as above, based on the third term of \eqref{eqn:MEB}, we get
\begin{equation*}
|X|^{31}|Y|^{32}|Z|^{8}\Delta^{128} \ll q^{35+o(1)}.
\end{equation*}
Putting it all together, we obtain the required bound on $\Delta$, which in turn concludes the proof of estimate \eqref{eqn:TrilinearB}.

\section{Proof of Corollary \ref{cor:TrilinearEqSizes}}
Note that the second term of \eqref{eqn:TrilinearB} dominates the first term if 
\begin{equation}
\label{eqn:trilinearRange1v3}
|Z||X|^{3/88} \ll q^{1/2+ o(1)}.
\end{equation}
Then, under the consideration that $|X| = |Y| = |Z|$, in the range implied by~\eqref{eqn:trilinearRange1v3}, trivially, we have
\begin{align*}
\bigg|\sum_{x\in X}\sum_{y\in Y} \sum_{z \in Z}\alpha_{x}\beta_{y}\gamma_{z}\psi(xyz)\bigg|  \leq |X|^{3} &< q^{9/32 + o(1)} |X|^{311/128}\bigg(\frac{|X|^{73/128}}{q^{9/32}}\bigg) \\ &\ll q^{9/32 + o(1)} |X|^{311/128}\bigg(\frac{1}{|X|^{1/88}}\bigg)
\\ &< q^{9/32 + o(1)} |X|^{311/128}.
\end{align*}
Thus the second term of \eqref{eqn:TrilinearB} can be eliminated, as required.

\section{Proof of Theorem \ref{thm:GaussSumViaTrilinearSum}}
Let $H$ denote a multiplicative subgroup of $\F_q^*$. Suppose that for all proper subfields $G$ of $\F_q$, we have $|H\cap G| \ll |G|^{1/2}.$ Then, by Claim~\ref{claim:EquivalencyofConditions}, $H$ also satisfies condition~\eqref{eqn:trilinearantifield}. Hence, by Theorem~\ref{thm:TrilinearSumsViaEnergy}, for $a\in \F_q^*$, we have
\begin{align}
\label{eqn:ESBoundViaTrilinear}
 \bigg|\sum_{h\in H}\psi_a(h)\bigg| &= \frac{1}{|H|^{2}}\bigg|\sum_{h_1\in aH} \sum_{h_2\in H}\sum_{h_3\in H}\psi(h_1h_2 h_3)\bigg| \\[1em]
\nonumber& \ll q^{9/32 + o(1)}\Big(|H|^{55/128} + q^{-1/128}|H|^{57/128}\Big).
\end{align}
We fix $H$ to be the group of $n$th powers, such that if $n$ satisfies restriction~\eqref{eqn:gausssumcnd2}, then $H$ satisfies restriction~\eqref{eqn:Subgroupcnd2}. The desired estimate on $S_n(a)$ follows form~\eqref{gausssumidentity2} and~\eqref{eqn:ESBoundViaTrilinear}.

\section*{Acknowledgement}
The author would like to thank Antal Balog and Igor Shparlinski for their helpful comments on the manuscript.

\end{document}